\newtheorem{assumption}{Assumption}
\newtheorem{theorem}{Theorem}
\newtheorem{lemma}{Lemma}
\newtheorem{remark}{Remark} 
\title{\LARGE \bf
	A Distributed Buffering Drift-Plus-Penalty Algorithm for Coupling Constrained Optimization
}
\author{
	Dandan Wang, Daokuan Zhu, Zichong Ou, and Jie Lu,~\IEEEmembership{Member,~IEEE}
	\thanks{D. Wang is with the School of Information Science and Technology, ShanghaiTech University, Shanghai 201210, China, with the University of Chinese Academy of Sciences, Beijing 100049, China, and with Shanghai Institute of Microsystem and Information Technology, Chinese Academy of Sciences, Shanghai 200050, China. Email:
		{\tt\small wangdd2@shanghaitech.edu.cn}.}
	\thanks{D. Zhu, Z. Ou, and J. Lu are with the School of Information Science and
		Technology, ShanghaiTech University, Shanghai 201210, China.
		Email:{\tt\small\{zhudk, ouzch, lujie\}@shanghaitech.edu.cn}.}}
\begin{document}
	\maketitle
	\thispagestyle{empty}
	\pagestyle{empty}

%\documentclass[journal,twoside,web]{ieeecolor}

% Comment this line out if you need a4paper

%\documentclass[a4paper, 10pt, conference]{ieeeconf}      % Use this line for a4 paper

%\IEEEoverridecommandlockouts                              % This command is only needed if
% you want to use the \thanks command

%%%%%%%%%%%%%%%%%%%%%%%%%%%%%%%%%%%%%%%%%%%%%%%%%%%%%%%%%%%%%%%%%%%%%%%%%%%%%%%%
\begin{abstract}
	This paper focuses on distributed constrained optimization over time-varying directed networks, where all agents cooperate to optimize the sum of their locally accessible objective functions subject to a coupled inequality constraint consisting of all their local constraint functions. To address this problem, we develop a buffering drift-plus-penalty algorithm, referred to as B-DPP. The proposed B-DPP algorithm utilizes the idea of drift-plus-penalty minimization in centralized optimization to control constraint violation and objective error, and adapts it to the distributed setting. It also innovatively incorporates a buffer variable into local virtual queue updates to acquire flexible and desirable tracking of constraint violation. We show that B-DPP achieves $O(1/\sqrt{t})$ rates of convergence to both optimality and feasibility, which outperform the alternative methods in the literature. Moreover, with a proper buffer parameter, B-DPP is capable of reaching feasibility within a finite number of iterations, which is a pioneering result in the area. Simulations on a resource allocation problem over 5G virtualized networks demonstrate the competitive convergence performance and efficiency of B-DPP. 
	
	%	B-DPP assign each agent a local virtual queue to track constraint violation, enabling distributed implementation. 

	%	, is proposed inspired by the drift-plus-penalty (DPP) algorithm. The proposed algorithm adapts DPP to distributed sceneries and employs a buffer parameter to provide a buffer for the cumulation of constraint violations. We show that B-DPP achieves $O(1/\sqrt{t})$ convergence rates with respect to objective error and constraint violation. Moreover, with a proper buffer parameter, B-DPP is capable of achieving feasibility within finite iterations, which is a pioneering achievement in terms of the feasibility of nonlinear coupled inequality constraints. Simulation results on a resource allocation problem over 5G virtualized networks corroborate the efficiency of B-DPP and demonstrate its competitive convergence performance. 

	%letting each agent maintain a virtual queue variable to record local constraint violations and a primal variable to minimize the local objective function plus the cross term associated with local constraint violations at each iteration.
\end{abstract}
%
%\begin{IEEEkeywords}
%	Constrained optimization, distributed optimization,  feasibility, time-varying directed networks.
%\end{IEEEkeywords}

\section{Introduction}
\label{sec:introduction}

%\IEEEPARstart{D}{istributed} constraint-coupled optimization has attracted significant interest recently since it captures various applications in networked systems, such as wireless communication networks,\cite{xiao2018distributed}, power systems\cite{vujanic2016decomposition}, and multi-robot system\cite{huang2023distributed}, in which the limitation on each agent caused by a common resource sharing or a global requirement is often manifested by coupled constraints. The objective of distributed constraint-coupled optimization is to minimize a sum of local objective functions while satisfying both coupled constraints and local set constraints. 

%\IEEEPARstart{D}{istributed} 
Distributed constrained optimization has gained substantial interest recently 
owing to its widespread applications in various networked systems, such as task offloading in wireless network\cite{xiao2018distributed}, resource allocation in power system\cite{vujanic2016decomposition}, and target tracking in multi-robot system\cite{huang2023distributed}. In these scenarios, individual agents often tangle with others via coupled constraints to satisfy shared resource allocation or global requirements. All agents cooperate only with neighbors to minimize the sum of local objective functions subject to both coupled constraints and local set constraints. 
%where each agent often tangles with others via coupled constraints to satisfy a common resource sharing or a global requirement.   
%where the coupled constraints involving information from all agents imposes challenges in designing and analysis of distributed methods.  

%The coupled constraints involving information from all agents make it a significant challenge to handle them in a distributed manner. Although consensus-based algorithms\cite{li2020distributed,wu2019fenchel} can be employed to solve the constraint-coupled problem, it may cause redundant memory and excessive communication cost since they require that each agent  maintains copies of all other agents' decision variables. A number of distributed algorithms that allow each agent to update its local decision variable have been developed to overcome this dilemma,

%Numerous algorithms are able to handle coupled constraints, where the drift-plus-penalty (DPP) algorithm \cite{yu2015convergence} 

So far, a variety
of distributed optimization algorithms have been developed to handle coupled constraints, including the  integrated primal-dual
proximal algorithm\cite{wu2022distributed}, extensions of the dual subgradient method with double averaging\cite{liu2020unitary,liang2019Cyber,bose2021distributed}, consensus-based dual subgradient methods \cite{simonetto2016primal, liang2019distributed,falsone2017dual}, primal-dual algorithms based on the saddle-point method \cite{chang2014distributed,mateos2016distributed,lee2017sublinear,li2020distributed}, and a distributed algorithm based on a relaxation of the primal problem \cite{notarnicola2019constraint} and its extension to time-varying graphs\cite{camisa2021distributed}. Nevertheless, the works \cite{liu2020unitary,liang2019Cyber,bose2021distributed,liang2019distributed, simonetto2016primal, wu2022distributed, notarnicola2019constraint} cannot be applied to time-varying directed networks, and  \cite{chang2014distributed,falsone2017dual,notarnicola2019constraint,liang2019distributed} only achieve asymptotic convergence without guaranteed convergence rates. Although \cite{mateos2016distributed, camisa2021distributed,li2020distributed,lee2017sublinear} provide explicit convergence rates for distributed constraint-coupled optimization over time-varying networks, \cite{camisa2021distributed,li2020distributed} suffer from convergence rates slower than $O(1/\sqrt{t})$, \cite{mateos2016distributed, camisa2021distributed} require all agents know a Slater point, which is unavailable in most practical systems, and \cite{lee2017sublinear} needs a restrictive assumption on the boundedness of dual variables.

It is also worth noting that the existing works\cite{liu2020unitary,liang2019Cyber,bose2021distributed,simonetto2016primal,notarnicola2019constraint,liang2019distributed,chang2014distributed,falsone2017dual,mateos2016distributed,camisa2021distributed,wu2022distributed,lee2017sublinear,li2020distributed,yu2015convergence} only have asymptotic feasibility guarantees when addressing coupled nonlinear constraints. However, in practice, one can only run the algorithms for a finite number of iterations, and thus the algorithms possibly output infeasibile solutions. This may cause system breakdown, especially for resource allocation problems in real-world systems whose constraints often have physical meanings\cite{wu2023distributed}.

%This means that while the limit of iterations satisfies the coupled constraints, the practical implementation with a finite number of iterations yields infeasible solutions, which may cause system breakdown, especially for resource allocation problems in practical systems whose constraints have physical meanings\cite{wu2023distributed}. 
%The existing works \cite{liu2020unitary,liang2019Cyber,bose2021distributed,simonetto2016primal,notarnicola2019constraint,liang2019distributed,chang2014distributed,falsone2017dual,mateos2016distributed,camisa2021distributed,wu2022distributed,lee2017sublinear,li2020distributed} overcome this dilemma, which allows each agent to update its own decision variable, where \cite{liu2020unitary,liang2019Cyber,bose2021distributed} are inspired by the dual subgradient method with double averaging, \cite{chang2014distributed, liang2019distributed} developed 
%primal-dual algorithms, \cite{simonetto2016primal,falsone2017dual,mateos2016distributed,lee2017sublinear,li2020distributed} were designed by combing  various consensus techniques with the saddle-point method or the dual subgradient method,  were based on a relaxation of the  constraint-coupled problem and the duality theory, \cite{wu2022distributed}
%proposed a distributed integrated primal-dual
%proximal algorithm. 

Motivated by the above challenges, this paper proposes a buffering drift-plus-penalty (B-DPP) algorithm  to solve distributed coupling constrained optimization with additive global objective and inequality constraint functions over time-varying directed networks. Our design of B-DPP is initially inspired from the centralized drift-plus-penalty (DPP) algorithm, which characterizes optimality and feasibility via analyzing a drift-plus-penalty term and tracks infeasibility via a global virtual queue update. Instead, our proposed B-DPP introduces a \textit{local} virtual queue variable for each agent to track cumulative constraint violations, and makes local decisions to minimize an upper bound on a drift-plus-penalty term, so that the constraint violation and the global objective error can be controlled in a fully distributed fashion. Furthermore, B-DPP integrates a buffer parameter with the local virtual queue updates to enhance flexibility and achieve desirable effect of  constraint violation tracking, despite the lack of centralized coordination required by DPP. Our contributions are highlighted as follows:
\begin{enumerate}
	\item B-DPP is a novel distributed algorithm for solving \textit{coupling constrained} optimization over \textit{time-varying directed} networks, while the existing methods \cite{liu2020unitary,liang2019Cyber,bose2021distributed,liang2019distributed, simonetto2016primal, wu2022distributed, notarnicola2019constraint} are limited to fixed underlying networks.
	\item B-DPP is memory-wise and computationally efficient, in which each agent only maintains a local decision variable, a virtual queue variable, and a running average of its local decision variables. Thus, B-DPP requires less memory to store the variables and enjoy lower computation cost compared with the related methods in \cite{liu2020unitary,liang2019Cyber,bose2021distributed,simonetto2016primal,liang2019distributed,chang2014distributed,mateos2016distributed,wu2022distributed,lee2017sublinear,li2020distributed}.
	\item B-DPP is shown to achieve $O(1/\sqrt{t})$ ergodic convergence rates in terms of both objective error and constraint violation under mild assumptions, which outperform those of the alternative methods \cite{liu2020unitary,liang2019Cyber,bose2021distributed,simonetto2016primal,notarnicola2019constraint,liang2019distributed,chang2014distributed,falsone2017dual,mateos2016distributed,camisa2021distributed, lee2017sublinear,li2020distributed}.
	\item 
	With a proper buffer parameter, B-DPP is shown to reach \textit{feasibility} within a finite number of iterations. To the best of our knowledge, there have been no existing distributed algorithms that ensure such a result.
\end{enumerate}
The paper unfolds as follows.
Section~\ref{sec:problemformulation} formulates a distributed constraint-coupled problem over time-varying directed networks. Section~\ref{sec:B-DPPalgorithm} develops the proposed B-DPP
algorithm, and Section~\ref{sec:MainResults} provides convergence analysis of B-DPP. Section~\ref{sec: numericalexperiments} presents the numerical
experiments, and Section~\ref{sec:conclusion} concludes the paper.

\addtolength{\topmargin}{0.045in}
\section{Problem Formulation}
\label{sec:problemformulation}
%0901根据老板昨天0831的反馈修改
Consider a time-varying directed network with a set $\mathcal{V}=\{1, 2, \ldots, N\}$ of agents. Each agent $i \in \mathcal{V}$ privately possesses a local objective function $f_{i}:\mathbb{R}^{d_i}\rightarrow \mathbb{R}$, a constraint function $g_{i}:\mathbb{R}^{d_i}\rightarrow \mathbb{R}^p$, and a local constraint set $X_i\subseteq\mathbb{R}^{d_i}$.  All the agents cooperatively solve the following constraint-coupled problem under Assumption~\ref{asm:probassumption}:
\begin{equation} \label{eq:primalprob}
	\begin{array}{cl}
		\underset{\substack{x_i, \forall i \in \mathcal{V} }}{\operatorname{minimize}} ~\!\!\!& f(\mathbf{x}):=\sum_{i=1}^{N} f_{i}(x_{i})\\  %\vspace{1mm}  
		{\operatorname{ subject~to }}\!\!\! & g (\mathbf{x}):=\sum_{i=1}^{N}g_{i}(x_{i}) \leq \mathbf{0}_p,\\\vspace{2mm} 
		{} & {x_i \in X_i,\;\forall i \in \mathcal{V}},
	\end{array}
\end{equation} 
where $\mathbf{x}=[(x_1)^T, \ldots, (x_N)^T]^T\in \mathbb{R}^{\sum_{i=1}^{N}d_i}$ and $X=X_1\times \cdots \times X_N$ is the Cartesian product of all the $X_i$'s.
\begin{assumption}\label{asm:probassumption}
	Problem~\eqref{eq:primalprob} satisfies the following:
	\begin{enumerate}[(1)]
		\item For each $i\in\mathcal{V}$, $X_i$ is a compact convex set with diameter $R:= \sup_{x_i,\tilde{x}_i \in X_i} \|x_i-\tilde{x}_i\| <\infty$. \label{compactset}
		\item For each $i\in\mathcal{V}$, $ f_{i}$ and $g_i$ are convex on $X_{i}$. \label{convexassumption} %其实在证明中我们是可以只要求$\sum_{i=1}^{N} fi(xi)$ 凸的
		\item There exists at least one optimum $\mathbf{x}^*=[(x_1^*)^T, \ldots, (x_N^*)^T]^T$ of problem~\eqref{eq:primalprob}.\label{least1soluprimal}
		\item There exists a constant $\epsilon >0$ and a point $\hat{x}_i \in\operatorname{relint}(X_i),~\forall i \in \mathcal{V}$ such that $\sum_{i=1}^{N}g_{i}(\hat{x}_{i}) \leq-\epsilon\mathbf{1}_p$. \label{slatercondition} 
	\end{enumerate}
\end{assumption}

Assumption~\ref{asm:probassumption}\eqref{compactset}--\eqref{convexassumption} indicate that there exist constants $F > 0$,  $G>0$ such that for any $x_i$, $\tilde{x}_{i} \in X_i$, $ i \in \mathcal{V}$, $\|g_i(x_i)\|\le F$, $\| f_{i}(x_{i})-f_{i}(\tilde{x}_{i})\|\le G$. Assumption~\ref{asm:probassumption}\eqref{least1soluprimal} ensures that problem~\eqref{eq:primalprob} is solvable and Assumption~\ref{asm:probassumption}\eqref{slatercondition} is the Slater's condition that will be used to reduce infeasibility.

We model the network as the time-varying directed graph $\mathcal{G}_t=(\mathcal{V}, \mathcal{E}_t)$, where $t$ denotes time instance and at each time $t \ge 0$, $\mathcal{E}_t\subseteq\{\{i,j\}:i,j\in\mathcal{V},\;i\neq j\}$ represents the set of edges, i.e., agent $i$ can receive messages from agent $j$ if and only if there exists a directed link $(j,i) \in  \mathcal{E}_t$. Let $\mathcal{N}_{i,t}^{\text{in}}=\{j | (j,i) \in  \mathcal{E}_t\}\cup \{i\}$ and $\mathcal{N}_{i,t}^{\text{out}}=\{j|(i,j) \in  \mathcal{E}_t\}\cup\{i\}$ be the sets of in-neighbors and out-neighbors of node $i$. We impose the following connectivity assumption on $\mathcal{G}_t$, which does not require the network to be strongly connected at every time instant and allows each agent to influence all the others within every time interval of length $B$. 
\begin{assumption}[\textit{B-connectivity}]\label{Bconnect}
	There exists an integer $B \ge1$ such that for any $k \ge 0$, the graph $(\mathcal{V}, \bigcup_{t=kB+1}^{(k+1)B}\mathcal{E}_t)$ is strongly connected. 
\end{assumption}

We associate $\mathcal{G}_t$ with a time-varying mixing matrix $W_t$ for information fusion, whose $(i,j)$-entry at time $t$ is given by $w_{ij,t} >0$ if $(j,i) \in  \mathcal{E}_t$ or $i=j$, and $w_{ij,t}=0$, otherwise. 
\begin{assumption}\label{ass: networkassumption}
	The mixing matrices $W_t$, $\forall t \ge 0$ satisfy the following:
	\begin{enumerate}[(1)]
		\item There exists a constant $a \in (0,1)$ such that for each $t \ge 0$, $w_{ij,t}>a$ if $w_{ij,t} >0$. \label{graphnondegeneracy}
		\item  $W_t$, $\forall t\ge 0$ is doubly stochastic, i.e., $\sum_{i=1}^{N} w_{ij,t}=\sum_{j=1}^{N} w_{ij,t}=1$, $\forall i,j \in \mathcal{V}$.\label{doublystochastic}
	\end{enumerate}
\end{assumption}

Assumption~\ref{ass: networkassumption} is a standard assumption that is also adopted in \cite{chang2014distributed,falsone2017dual,mateos2016distributed,camisa2021distributed,lee2017sublinear,li2020distributed}. Assumption~\ref{ass: networkassumption} \eqref{doublystochastic} requires balanced communications between agents, which can be simply satisfied if we allow  bidirectional communication between agents and enforce symmetry on the mixing matrix.
\section{Algorithm Development}\label{sec:B-DPPalgorithm}
%这部分讲述algorithm development
%先说： 在这部分，我们提出一个buffer drift plus penalty 算法解决w

In this section, we develop a novel distributed algorithm to solve problem \eqref{eq:primalprob} over time-varying directed networks.
%\subsection{Review of the DPP algorithm} \label{ssec:reviewDPP}
%The drift-plus-penalty (DPP) algorithm \cite{neely2022stochastic}\cite{yu2015convergence} was originally developed for solving stochastic optimization in wireless communication and queueing systems. It solves inequality-constrained convex optimization by introducing a virtual queue for each inequality constraint to record cumulative constraint violations and then obtaining an upper bound on Lyapunov drift involving the virtual queues and the objective function named as a penalty term.

% The drift-plus-penalty algorithm was originally developed to solve the problem of joint network stability and utility maximization by minimizing a Lyapunov drift expression plus a penalty term, where the Lyapunov drift is associated with the the stability of data queues in the stochastic data network and the penalty corresponds to the network utility. 
%%drift-plus-penalty algorithm  最开始应用在 （忽然觉得不需要从头开始介绍drift plus penalty. 算了还是加一句干什么的）
%The virtual queues can be viewed as the penalizations to the violations of the constraints. 
%上面这一段也可直接intro中介绍，接在related works的介绍之后，说 the drift-plus has been shown ，，，，，，by introducing。。。。However，it cannot by implemented in a disrtibuetd way when sloving coupled constraints. 再说 那在algorithm development中直接开始下面一段

We first consider applying the drift-plus-penalty (DPP) algorithm \cite{yu2015convergence} to solve problem \eqref{eq:primalprob}. It introduces virtual queue variables for inequality constraints to record cumulative constraint violations and derives upper bounds on objective error and constraint violation based on the analysis of a drift-plus-penalty term, where the drift involves the change of virtual queues and the penalty term relies on the objective function.
%makes a primal decision at each iteration to minimize the upper bound on a drift-plus-penalty expression in which the drift involves the change of virtual queues and the penalty term represents the objective function.
Let $\mu_t \in \mathbb{R}^p$ be the virtual queue variable for the coupled inequality constraint $g(\mathbf{x})=\sum_{i=1}^{N}g_{i}(x_{i})\le \mathbf{0}_p$ and $\mathbf{x}_t=[(x_{1,t})^T, \ldots, (x_{N,t})^T]^T \in \mathbb{R}^{\sum_{i=1}^{N}d_i}$ be the estimate of the optimal solution $\mathbf{x}^*$ at time $t$. DPP is described as follows:  For arbitrarily given $\mu_0 \ge \mathbf{0}_p $ and each $ t \ge 0$,
\begin{align}
	\mathbf{x}_{t+1}&=\operatorname{\arg min}_{\mathbf{x} \in X}\{Vf(\mathbf{x})+\langle\mu_t,g(\mathbf{x})\rangle\}, \displaybreak[0]\label{eq:xtdpp}\\
	\mu_{t+1}&=\max\{\mu_t+g(\mathbf{x}_{t+1}), \mathbf{0}_p\}, \displaybreak[0]\label{eq:mutdpp}
\end{align}
where $V$ is a positive constant parameter used to balance the objective minimization and the infeasibility reduction. %Its values depends on the strong convexity parameter of the objective function \cite{yu2015convergence}.

In \eqref{eq:mutdpp}, $\mu_{t+1}$ can be viewed as the queue backlog of constraint violation at time $t+1$. Once a constraint violation occurs at time $t+1$, i.e., $g(\mathbf{x}_{t+1}) > \mathbf{0}_p$, $\mu_{t+1}$ is equal to the queue backlog at time $t$, i.e., $\mu_t$, plus $g(\mathbf{x}_{t+1})$. By adding \eqref{eq:mutdpp} from $k=0$ to $t-1$, we obtain $\mu_t \ge \sum_{k=1}^{t}g(\mathbf{x}_{k})$, which suggests that the virtual queue $\mu_t$ can be used to track the cumulative constraint violations. From \eqref{eq:xtdpp}, a small or zero queue backlog $\mu_t$ may cause the objective function to take a dominant role when updating $\mathbf{x}_{t+1}$, possibly followed by an increase in constraint violation, while a large $\mu_t$ can amplify the importance of the constraint when updating $\mathbf{x}_{t+1}$, potentially decreasing the subsequent constraint violation. Therefore, the reductions of objective value and infeasibility alternate with each other in DPP.

Although DPP is applicable to solve problem \eqref{eq:primalprob}, it cannot be executed in a distributed manner due to the existence of the coupled inequality constraint $\sum_{i=1}^{N}g_{i}(x_{i})$. Furthermore, from the perspective of convergence analysis, the upper bound on its constraint violation only depends on the value of $\mu_t$, which may be very loose when $\mu_t$ is large. To tackle these two issues, below we propose a new distributed algorithm called \textit{ buffering drift-plus-penalty (B-DPP) algorithm}. %described in Algorithm~\ref{alg:offlineAlgorithmgixi}.
\addtolength{\topmargin}{0.045in}
\begin{algorithm} [t]
	\caption{ B-DPP }
	\label{alg:offlineAlgorithmgixi}
	\begin{algorithmic}[1] 
		\STATE {\textbf{Initialization}}:  Each node $i \in \mathcal{V}$ arbitrary selects $x_{i,0} \in X_i$, $\mu_{i, 0}=\mathbf{0}_p$.
		\FOR {$t=0,1,2,\ldots$} 
		\STATE	Each node $j \in \mathcal{V}$ sends its local information $\mu_{j,t}$ to every out-neighbor $i \in \mathcal{N}_{j,t}^{\text{out}}$. 
		\STATE After receiving $\mu_{j,t}$ from each in-neighbor $j \in \mathcal{N}_{i,t}^{\text{in}}$, each node $i \in \mathcal{V}$ computes $\hat{\mu}_{i, t}$ according to \eqref{eq: updateofhatmuit}.
		\STATE Each node $i \in \mathcal{V}$ updates $x_{i,t+1}$ according to \eqref{eq:offlineupdateofxt}.
		\STATE Each node $i \in \mathcal{V}$ updates $\mu_{i,t+1}$ according to \eqref{eq:offlineupdateofuk}.
		\ENDFOR
	\end{algorithmic}
\end{algorithm} 

At each time $t\ge0$, each agent $i \in \mathcal{V}$ maintains a primal variable $x_{i,t}\in \mathbb{R}^{d_i}$ as its estimate of $x_{i}^*$ and a local virtual queue variable $\mu_{i,t} \in \mathbb{R}^{p}$ associated with its local constraint function $g_i(x_i)$ at time $t$. Starting from any $x_{i,0} \in X_i$, $\mu_{i,0}= \mathbf{0}_p $, each agent $i \in \mathcal{V}$ updates at time $ t \ge 0$ according to  
\begin{align}
	\hat{\mu}_{i, t}&=\sum\limits_{j \in \mathcal{N}_{i,t}^{\text{in}}} w_{i j, t} \mu_{j, t},\displaybreak[0]\label{eq: updateofhatmuit}\\
	x_{i,t+1}&=\arg\min\limits_{x_{i} \in X_i } \left\{ V_{t+1} f_{i}(x_{i})+\langle \hat{\mu}_{i, t}, ~ g_i(x_{i})\rangle \right.\notag \displaybreak[0]\\
	&~~~~~~~~~~~~~~~~\left.+\eta_{t+1}\|x_i-x_{i,t}\|^2\right\}, \label{eq:offlineupdateofxt} \\
	\mu_{i,t+1}&=\max\{\hat{\mu}_{i, t}+g_i(x_{i,t+1}), \mathbf{0}_p\}+\gamma_{t+1}\mathbf{1}_p \label{eq:offlineupdateofuk},
\end{align}
where $V_{t+1}>0$ is a trade-off parameter to balance objective minimization and infeasibility reduction, $\eta_{t+1}>0$ is the stepsize, and $\gamma_{t+1} >0$ is called the buffer parameter whose role will be explained shortly. 

Different from DPP in \eqref{eq:xtdpp}--\eqref{eq:mutdpp}, our proposed B-DPP algorithm associates each agent $i$'s local virtual queue $\mu_{i,t}$ with its local constraint function $g_i(x_i)$ instead of the global constraint $g(\mathbf{x})=\sum_{i=1}^{N}g_{i}(x_{i})$. In addition, we introduce an auxiliary variable $\hat{\mu}_{i,t}$ that fuses the virtual queues of agent $i$ and its in-neighbors to update $x_{i,t+1}$ and $\mu_{i,t+1}$, enabling distributed computation. Moreover, the proximal term $\eta_{t+1}\|x_i-x_{i,t}\|^2$ endows the minimized function in \eqref{eq:offlineupdateofxt} with strong convexity, which makes \eqref{eq:offlineupdateofxt} well-posed and contributes to convergence.

To understand the function of the buffer parameter $\gamma_{t}$, first note that \eqref{eq:offlineupdateofuk} guarantees $\mu_{i,t} \ge \gamma_{t}\mathbf{1}_p$, i.e., $\gamma_{t}$ can be viewed as a buffer for the local virtual queues. Thus, by setting $\gamma_t$ to be sufficiently away from zero at the beginning and gradually diminishing with time, we obtain extra safety in avoiding overly optimistic decisions that are caused by zero or tiny $\mu_{i,t}$ and would yield substantial constraint violation subsequently. Moreover, as is shown in the following lemma, $\gamma_{t}$ enables a tunable bound on constraint violation.

\begin{lemma}\label{lem:cumulativeconstraintvion}
	Suppose Assumptions \ref{asm:probassumption}--\ref{ass: networkassumption} hold. For any $t \ge 1$,
	\begin{align}
		\sum_{k=1}^t\sum_{i=1}^{N}g_i(x_{i,k})+N\sum_{k=1}^t\gamma_{k}\mathbf{1}_p
		\le \sum_{i=1}^{N}\mu_{i,t}. \displaybreak[0] \label{eq:cumulativeconsVio}
	\end{align}
\end{lemma}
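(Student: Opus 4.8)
The plan is to establish the inequality by induction on $t$, exploiting the structure of the virtual queue update \eqref{eq:offlineupdateofuk} together with the double stochasticity of the mixing matrices $W_t$. The key observation is that the max operator in \eqref{eq:offlineupdateofuk} only makes $\mu_{i,t+1}$ larger, so dropping it yields the one-step bound $\mu_{i,t+1} \ge \hat{\mu}_{i,t} + g_i(x_{i,t+1}) + \gamma_{t+1}\mathbf{1}_p$ componentwise. Summing this over all agents $i \in \mathcal{V}$, the aggregate mixing term $\sum_{i=1}^N \hat{\mu}_{i,t} = \sum_{i=1}^N \sum_{j \in \mathcal{N}_{i,t}^{\text{in}}} w_{ij,t}\mu_{j,t}$ collapses to $\sum_{j=1}^N \mu_{j,t}$ by Assumption~\ref{ass: networkassumption}\eqref{doublystochastic}, since each column of $W_t$ sums to one. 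This gives the crucial recursion
\begin{align}
	\sum_{i=1}^N \mu_{i,t+1} \ge \sum_{i=1}^N \mu_{i,t} + \sum_{i=1}^N g_i(x_{i,t+1}) + N\gamma_{t+1}\mathbf{1}_p. \label{eq:proofplan_recursion}
\end{align}

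From here I would simply unroll the recursion \eqref{eq:proofplan_recursion} from the initialization. Starting from $\mu_{i,0} = \mathbf{0}_p$ for every $i$, so that $\sum_{i=1}^N \mu_{i,0} = \mathbf{0}_p$, telescoping \eqref{eq:proofplan_recursion} from index $0$ up to $t-1$ yields
\begin{align}
	\sum_{i=1}^N \mu_{i,t} \ge \sum_{k=1}^t \sum_{i=1}^N g_i(x_{i,k}) + N \sum_{k=1}^t \gamma_k \mathbf{1}_p,
\end{align}
which is exactly \eqref{eq:cumulativeconsVio}. One should be a little careful with the index bookkeeping: the recursion at step $k$ (for $k = 0, \dots, t-1$) contributes $g_i(x_{i,k+1})$ and $\gamma_{k+1}$, so after reindexing the sums run from $k=1$ to $t$, matching the claimed statement.

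I do not anticipate a genuine obstacle here — the result is essentially a bookkeeping identity. The only point requiring minor care is confirming that the column-stochasticity (rather than row-stochasticity) of $W_t$ is what makes the mixing terms telescope correctly after summation over $i$; this is guaranteed by Assumption~\ref{ass: networkassumption}\eqref{doublystochastic}. Note also that Assumptions~\ref{asm:probassumption} and \ref{Bconnect} are not actually needed for this particular lemma — only the nonnegativity of $\gamma_{t+1}$, the well-posedness of the $x_{i,t+1}$ update (so that $g_i(x_{i,t+1})$ is defined), and double stochasticity are used — but they are harmlessly included in the hypotheses for uniformity with the rest of the analysis.
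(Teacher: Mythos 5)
Your proof is correct and follows essentially the same route as the paper: drop the componentwise max in \eqref{eq:offlineupdateofuk}, sum over agents and use the column-stochasticity of $W_t$ from Assumption~\ref{ass: networkassumption}\eqref{doublystochastic} to obtain the recursion $\sum_{i=1}^{N}\mu_{i,k+1}\ge\sum_{i=1}^{N}\mu_{i,k}+\sum_{i=1}^{N}g_i(x_{i,k+1})+N\gamma_{k+1}\mathbf{1}_p$, then telescope from $k=0$ to $t-1$ with $\mu_{i,0}=\mathbf{0}_p$. Your additional remarks on the index bookkeeping and on which assumptions are actually used are accurate but do not change the argument.
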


\begin{proof}
	Based on \eqref{eq:offlineupdateofuk} and Assumption~\ref{ass: networkassumption},	$\sum_{i=1}^{N}\mu_{i,k+1}  \ge \sum_{i=1}^{N}\mu_{i,k}+\sum_{i=1}^N g_i(x_{i,k+1})+N\gamma_{k+1}\mathbf{1}_p$, $\forall k \ge 0$ holds. Adding this inequality from $k=0$ to $t-1$ yields \eqref{eq:cumulativeconsVio}.
\end{proof}

Lemma~\ref{lem:cumulativeconstraintvion} gives an upper bound on the cumulative constraint violation $\sum_{k=1}^t\sum_{i=1}^{N}g_i(x_{i,k})$, $\forall t \ge 1$, which depends on both the sum of the local virtual queues $\mu_{i,t}$, $\forall i\in\mathcal{V}$ and the history of the buffer parameter $\gamma_{t}$. Hence, unlike the centralized DPP algorithm \eqref{eq:xtdpp}--\eqref{eq:mutdpp} that only utilizes the global virtual queue to track infeasibility, our proposed algorithm \eqref{eq: updateofhatmuit}--\eqref{eq:offlineupdateofuk} allows for additional degree of freedom and, thus, has the potential for achieving desirable tracking effect in the decentralized scenario.

%接下来怎么过渡到直觉，使用xt+1的更新写，还是用约束违反
%为了更加的理解算法的更新，我们在下面给出xt+1更新的直觉，

%类似于DPP，B-DPP analyze the 约束违反和目标函数最小化 通过 the anlysis of drift-plus-expression. Where 
%用之前说的，这是xt+1的直觉，还要解释为什么minimize drift， 这个顺序该怎么说呢？
\addtolength{\topmargin}{0.045in}
\begin{table*}[t]
	\centering
	\footnotesize	\caption{{\upshape} Comparison with Related Works that Are Applicable to Time-Varying Directed Networks. Here, $\surd$ means the condition is required. }\label{table:comparison02} %and $d_i$ is number of decision variables of agent $i$, $p$ is the number of coupled constraints.
	\vspace*{0.1in}
	%%	\resizebox{1\textwidth}{!}改变表格中字体大小,解决表格过宽越界
	%p{}控制表格每列的宽度
	\resizebox{1\textwidth}{!}{ 
		\begin{tabular}{|p{3.5cm}|c|c|c|c|c|c|c|}
			\hline & \cite{falsone2017dual}&\cite{chang2014distributed}&\cite{mateos2016distributed}&\cite{lee2017sublinear}&\cite{li2020distributed}&\cite{camisa2021distributed}& B-DPP \\
			%	\hline Time-varying networks  &$\surd$ &$\surd$&$\surd$&$\surd$&$\surd$&$\surd$&$\surd$\\
			\hline Knowledge of a Slater point&  & $\surd$  &$\surd$& &&$\surd$&\\
			\hline Objective error  &asymptotic&asymptotic&$O(\frac{1}{\sqrt[]{t}})$ &$O(\frac{1}{\sqrt{t}})$&$O(\frac{1}{\sqrt[6]{t}})$&$O(\frac{1}{\ln t})$&$O(\frac{1}{\sqrt{t}})$ \\
			\hline Constraint violation &asymptotic&asymptotic&$O(\frac{1}{\sqrt[]{t}})$&$O(\frac{1}{\sqrt[]{t}})$&$O(\frac{1}{\sqrt[6]{t}})$&asymptotic&$O(\frac{1}{\sqrt[]{t}})$ / \textit{zero}\\
			\hline
			Numbers of variables that need to be stored locally  & $2 d_i+p$&$3 d_i\!+\!2p\!+\!1$& $2d_i+2p$&$2d_i+2p$&$2d_i+2p$&$d_i\!+\!2p\!+\!1$&$2d_i+p$\\
			\hline
	\end{tabular}}
\end{table*}

%This approach not only facilitates distributed computation for each agent but also allows $\mu_{i,t+1}$ to track the constraint violations of agent $i$ and its neighbors according to \eqref{eq:offlineupdateofuk}. To see it, we regard $\mu_{i,t}$
%assign $\mu_{i,t}$ to agent $i$ for tracking its local constraint violations and approximating $\mu_t$ in \eqref{eq:xtdpp} with $\hat{\mu}_{i, t}$, which only , the B-DPP algorithm can be implemented in a distributed way. 

Below, we describe how B-DPP controls the constraint violation and the objective value through the update \eqref{eq:offlineupdateofxt}. Let $\bar{\mu}_t=\sum_{i=1}\mu_{i,t}$ and % $L_t=\frac{1}{2}\|\bar{\mu}_t\|^2$ be the Lyapunov function to quantify the local virtual queues length. We define 
let $\triangle_t=\frac{1}{2}\|\bar{\mu}_{t+1}\|^2-\frac{1}{2}\|\bar{\mu}_t\|^2$ be a Lyapunov drift describing the evolution of the local virtual queues. A large $\triangle_t$ implies that a severe constraint violation possibly occurs at time $t+1$ according to \eqref{eq:offlineupdateofuk}. The updates of $x_{i,t+1}$, $\forall i \in \mathcal{V}$ in \eqref{eq:offlineupdateofxt} intend to minimize an upper bound on the drift-plus-penalty term $\triangle_t+V_{t+1}\sum_{i=1}^{N}f(x_{i,t+1})$. This can be seen from the following inequality:

\begin{align}
	&\underbrace{\triangle_t}_{\text{drift}}\!+\!\underbrace{V_{t+1}\sum\limits_{i=1}^{N}f_i(x_{i,t+1})}_{\text{penalty}}\le (F\!+\!2\sqrt{p}\gamma_{t+1})\!\sum_{i=1}^{N}\|\bar{\mu}_t\!-\!\hat{\mu}_{i,t}\|\displaybreak[0]\notag\\ 
	&+\!\! \underbrace{\sum_{i=1}^{N}\!V_{t+1}f_i(x_{i,t+1})\!+\!\langle\hat{\mu}_{i, t}, ~ \!g_i(x_{i,t+1})\rangle\!\!+\!\eta_{t+1}\|x_{i,t+1}\!\!-\!\!x_{i,t}\|^2}_{\text{the minimized function in \eqref{eq:offlineupdateofxt}}}\displaybreak[0]\notag\\
	&+2NF^2+4NFp\gamma_{t+1}\!+\!4Np\gamma_{t+1}^2+N\langle\bar{\mu}_t, \gamma_{t+1}\mathbf{1}_p\rangle,\displaybreak[0] \label{eq:intuitionofCDPP}
\end{align}
%\begin{align}
%	&\underbrace{\triangle_t}_{\text{drift}}+\underbrace{V_{t+1}\sum\limits_{i=1}^{N}f_i(x_{i,t+1})+\eta_{t+1}\sum\limits_{i=1}^{N}\|x_{i,t+1}-x_{i,t}\|^2}_{\text{weighted penalty}}\displaybreak[0]\notag\\ 
%	&\le \underbrace{\sum_{i=1}^{N}\!V_{t+1}f_i(x_{i,t+1})\!+\!\langle\hat{\mu}_{i, t}, ~ \!g_i(x_{i,t+1})\rangle\!\!+\!\eta_{t+1}\|x_{i,t+1}\!\!-\!\!x_{i,t}\|^2}_{\text{the minimized function in \eqref{eq:offlineupdateofxt}}}\displaybreak[0]\notag\\ 
%	&+2NF^2+4NFp\gamma_{t+1}\!+\!4Np\gamma_{t+1}^2+N\langle\bar{\mu}_t, \gamma_{t+1}\mathbf{1}_p\rangle\displaybreak[0]\notag\\ 
%	&+\!(F\!+\!2\sqrt{p}\gamma_{t+1})\!\sum_{i=1}^{N}\|\bar{\mu}_t\!-\!\hat{\mu}_{i,t}\|, \displaybreak[0] \label{eq:intuitionofCDPP}
%\end{align}
which is proved in Appendix~\ref{prf:boundoftriangle}. Note that on the right-hand side of \eqref{eq:intuitionofCDPP}, the values of $x_{i,t}$ and $\mu_{i,t}$ are already obtained upon completing the previous time instance.
Hence, the primal decisions $x_{i,t+1}$, $\forall i \in \mathcal{V}$ are capable of controlling the constraint violation and the global objective value through minimizing the above bound on the drift-plus-penalty term.

%, which have been updated at previous time instance. Clearly, B-DPP makes the primal decisions $x_{i,t+1}$, $\forall i \in \mathcal{V}$ at each time $t$ to minimize the upper bound on the drift-plus-penalty expression and thus 

% aside from the minimized function in \eqref{eq:offlineupdateofxt}, the remaining part is determined by known variables and algorithm parameters, not involving $x_{i,t+1}$. Thus, B-DPP updates $x_{i,t+1}$, $\forall i \in \mathcal{V}$ to minimize the right-hand side of \eqref{eq:intuitionofCDPP}. The intuition behind B-DPP is transparent: it generates a sequence $\{x_{i,t}\}$, $\forall i \in \mathcal{V}$, $\forall t \in \{1, \ldots, T\}$ to minimize the upper bound on $\sum_{t=0}^{T-1}\triangle_t+V_{t+1}\sum_{i=1}^{N}f(x_{i,t+1})+\eta_{t+1}\sum_{i=1}^{N}\|x_{i,t+1}-x_{i,t}\|^2$. This means that the upper bounds on cumulative constraint violations and cumulative objective values are minimized because $\sum_{t=0}^{T-1}\triangle_t=\frac{1}{2}\|\bar{\mu}_T\|^2$ and the value of $\bar{\mu}_{T}$ determines the upper bound on cumulative constraint violations as we stated before. 

\begin{remark}\label{rmk:cdppcomputationcommu}
	B-DPP only requires each agent to exchange its own local virtual queue with its neighbors without leaking any information about its primal variable (i.e., its estimate on the global optimum), which secures the privacy of agents. Compared with \cite{wu2022distributed,liu2020unitary,liang2019Cyber,bose2021distributed,simonetto2016primal,liang2019distributed,chang2014distributed,mateos2016distributed,lee2017sublinear,li2020distributed} that need to maintain variables of dimension $> 2d_i+p$, B-DPP only needs each agent to store $2d_i+p$ variables, consuming less memory.
	\begin{remark}
		The time-varying parameters in B-DPP can be set as $V_t=\sqrt
		t$, $\eta_t=t$, and $\gamma_t =C/\sqrt{t}$, $\forall C>0$ according to  \eqref{eq:stepsizechosen} in next section. The parameter setting of B-DPP is straightforward and does not depend on the knowledge of a Slater point like \cite{chang2014distributed,simonetto2016primal,camisa2021distributed,mateos2016distributed}, require square summable stepsizes like \cite{falsone2017dual}, or need to know the time horizon $T$ in advance like \cite{bose2021distributed}. 
		
		% employs straightforward settings for its time-varying parameters $V_t$, $\eta_t$, and $\gamma_t$, given in \eqref{eq:stepsizechosen}, which does not require square summable parameters or the knowledge of a Slater point or the time horizon $T$.
		%Different from the existing works \cite{chang2014distributed,simonetto2016primal,camisa2021distributed,mateos2016distributed,falsone2017dual, bose2021distributed} in which \cite{chang2014distributed,simonetto2016primal,camisa2021distributed,mateos2016distributed} depend on the knowledge of a Slater point to compute parameters, \cite{falsone2017dual} requires square summable stepsizes, and  \cite{bose2021distributed} needs to know the time horizon $T$ in advance, B-DPP employs straightforward settings for its time-varying parameters $V_t$, $\eta_t$, and $\gamma_t$, given in \eqref{eq:stepsizechosen}, which does not require square summable parameters or the knowledge of a Slater point or the time horizon $T$.
	\end{remark}

\end{remark}

\section{Convergence Analysis}
\label{sec:MainResults}
In this section, we show that the proposed B-DPP algorithm  achieves sublinear convergence to objective  error and is able to have \textit{zero constraint violation} within \textit{finite time} under proper parameter settings.

%\subsection{Convergence Results}

To present our convergence results, let $\bar{x}_{i,t}:=\frac{1}{t}\sum_{k=1}^tx_{i,k}$, $ i\in\mathcal{V}$, $t \ge 1$ be the running average of the primal variable $x_{i,k}$. The following theorem provides convergence rates with respect to the objective error and the constraint violation  at $\bar{x}_{i,t}$, $\forall i \in \mathcal{V}$.

%可以这样叙述theorem: 如果我们设置Vt=根号t,etat=t，gamma_t=C/\sqrt{t},for. then for any t\ge 1: 目标函数平均值小于等于$\frac{C_f}{$\sqrt{t}$}$. 如果我们设置C\ge C_1, 则存在常数T1只依赖于Df，Dg such that for an有T\ge T_1, we have 约束违反小于0. 对于0 \le C<C_1, 约束违反小于等于C_g/$\sqrt{t}$.
\begin{theorem}\label{thm:convergencereuslts} 
	Suppose Assumptions \ref{asm:probassumption}--\ref{ass: networkassumption} hold. If
	\begin{equation} \label{eq:stepsizechosen}
		V_t=\sqrt{t}, \; \eta_t=t, \; \gamma_t=\frac{C}{\sqrt{t}}, \; \forall C> 0,
	\end{equation} 
	then for any $t \ge 1$, 
	\begin{align}
		&\sum_{i=1}^{N}f_i(\bar{x}_{i,t})-\sum_{i=1}^{N}f_i(x_i^*) \le \frac{C_f}{\sqrt{t}},\displaybreak[0] \label{eq:functionconvergence}\\
		&	\sum_{i=1}^{N}g_i(\bar{x}_{i,t}) \le \frac{C_g\mathbf{1}_p}{\sqrt{t}}, \displaybreak[0] \label{eq:sqrtTconstrviolation}
	\end{align}
	where $C_f$ and $C_g$ are positive constants depending on $N$, $B$, $F$, $G$, $R$, $p$, $a$, and $C$. In addition, there exist $C_0>0$ and $t_1>0$ such that for any $C \ge C_0$,
	%	Let $C_0$ be a positive constant . F, there exists $t_1>0$ such that 
	\begin{align}
		\sum_{i=1}^{N}g_i(\bar{x}_{i,t}) \le \mathbf{0}_p, ~\forall t \ge t_1, \displaybreak[0] \label{eq:zeroconstriant}
	\end{align} 
	where $C_0$ depends on $N$, $B$, $F$, $G$, $R$, $p$, $a$ and $t_1$ also depends on these constants along with $C$. The derivations of $C_f$, $C_g$, $C_0$, and $t_1$ are given in Appendix~\ref{prf:convergenceresults}.
\end{theorem}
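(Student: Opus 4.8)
The plan is to convert the per-step estimate \eqref{eq:intuitionofCDPP} into two cumulative inequalities --- one for the aggregate virtual queue, one for the objective --- by feeding in a uniform consensus-tracking bound and the optimality of the proximal step \eqref{eq:offlineupdateofxt}, and then telescoping with the choices in \eqref{eq:stepsizechosen}. First I record two preliminaries. (a) Under Slater (Assumption~\ref{asm:probassumption}\eqref{slatercondition}), Lagrangian duality yields a dual optimum $\lambda^*\in\mathbb{R}^p_{\ge 0}$ with $\|\lambda^*\|$ bounded by an explicit constant in $N,F,G,R,p,\epsilon$, hence $f(\mathbf{x})-f(\mathbf{x}^*)\ge-\langle\lambda^*,g(\mathbf{x})\rangle$ for all $\mathbf{x}\in X$. (b) A consensus lemma: writing \eqref{eq: updateofhatmuit}--\eqref{eq:offlineupdateofuk} as $\mu_{t+1}=W_t\mu_t+b_{t+1}$, the per-agent input $b_{i,t+1}=g_i(x_{i,t+1})+\gamma_{t+1}\mathbf{1}_p+r_{i,t+1}$, with projection residual $r_{i,t+1}\in[\mathbf{0}_p,F\mathbf{1}_p]$ (valid because $\hat\mu_{i,t}\ge\mathbf{0}_p$ forces $\hat\mu_{i,t}+g_i(x_{i,t+1})\ge-F\mathbf{1}_p$), has its \emph{buffer component $\gamma_{t+1}\mathbf{1}_p$ common to all agents}, so it cancels under centering; thus the centered inputs are bounded by $2F(1+\sqrt p)$ uniformly, \emph{independently of $C$}. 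Combined with the geometric ergodicity of products of $B$-connected doubly-stochastic matrices (Assumptions~\ref{Bconnect}, \ref{ass: networkassumption}), this bounds the consensus deviations that appear in \eqref{eq:intuitionofCDPP} by a constant $\Gamma=\Gamma(N,B,a,F,p)$ for every $t$. The $C$-independence of $\Gamma$ is the linchpin of the finite-time result.

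Next I assemble the master inequality. Since the map in \eqref{eq:offlineupdateofxt} is $2\eta_{t+1}$-strongly convex on $X_i$, its optimality at $x_{i,t+1}$ against $x_i^*$ gives, for each $i$,
\[
V_{t+1}f_i(x_{i,t+1})+\langle\hat\mu_{i,t},g_i(x_{i,t+1})\rangle+\eta_{t+1}\|x_{i,t+1}-x_{i,t}\|^2\le V_{t+1}f_i(x_i^*)+\langle\hat\mu_{i,t},g_i(x_i^*)\rangle+\eta_{t+1}\|x_i^*-x_{i,t}\|^2-\eta_{t+1}\|x_i^*-x_{i,t+1}\|^2 .
\]
Summing over $i$, substituting into \eqref{eq:intuitionofCDPP}, and bounding $\sum_i\langle\hat\mu_{i,t},g_i(x_i^*)\rangle\le F\Gamma$ (split each term through the aggregate queue $\tfrac1N\sum_j\mu_{j,t}$: the aggregate part is $\le 0$ because the queue is nonnegative and $\sum_i g_i(x_i^*)\le\mathbf{0}_p$, and the deviation part is $\le F\Gamma$ by (b)), I obtain, for every $t\ge 0$,
\[
\triangle_t+V_{t+1}\bigl(f(\mathbf{x}_{t+1})-f(\mathbf{x}^*)\bigr)\le\eta_{t+1}\sum_{i=1}^N\bigl(\|x_i^*-x_{i,t}\|^2-\|x_i^*-x_{i,t+1}\|^2\bigr)+\kappa_{t+1},
\]
where $\kappa_{t+1}$ collects the $O(F\Gamma)$ consensus terms, the constant $2NF^2$, the $O(\gamma_{t+1})$ and $O(\gamma_{t+1}^2)$ terms, and the buffer-coupling term of order $\gamma_{t+1}\langle\bar\mu_t,\mathbf{1}_p\rangle$.

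Now I use this twice. First, summing it over $t=0,\dots,T-1$ \emph{without rescaling}: the drift telescopes to $\tfrac12\|\bar\mu_T\|^2$ since $\bar\mu_0=\mathbf{0}_p$; with $\eta_t=t$ an Abel summation bounds the proximal sum by $\sum_i(\|x_i^*-x_{i,0}\|^2+\sum_{t=1}^{T-1}\|x_i^*-x_{i,t}\|^2)\le NTR^2$; with $\gamma_t=C/\sqrt t$, $\sum_{t\le T}\gamma_t\le 2C\sqrt T$ and $\sum_{t\le T}\gamma_t^2\le C^2(1+\ln T)$. The penalty on the left I move to the right via (a) together with $\sum_i g_i(x_{i,t+1})\le\bar\mu_{t+1}-\bar\mu_t-N\gamma_{t+1}\mathbf{1}_p$ (per-step Lemma~\ref{lem:cumulativeconstraintvion}) and an Abel summation, which turns $-\sum_t V_{t+1}(f(\mathbf{x}_{t+1})-f(\mathbf{x}^*))$ into at most $V_T\langle\lambda^*,\bar\mu_T\rangle\le\sqrt T\,\|\lambda^*\|\,\|\bar\mu_T\|$; the cumulative buffer-coupling term $\sum_t N\gamma_{t+1}\langle\bar\mu_t,\mathbf{1}_p\rangle$ is closed by the self-consistent ansatz $\|\bar\mu_t\|\le K\sqrt t$. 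Solving the resulting quadratic in $K$ gives $\|\bar\mu_T\|\le K\sqrt T$ with $K=K_0+K_1 C$, $K_0,K_1$ depending only on $N,B,F,G,R,p,a$; then \eqref{eq:sqrtTconstrviolation} follows by Lemma~\ref{lem:cumulativeconstraintvion} and Jensen, $\sum_i g_i(\bar x_{i,T})\le\tfrac1T\sum_{k\le T}\sum_i g_i(x_{i,k})\le\tfrac1T\bar\mu_T\le\tfrac{K}{\sqrt T}\mathbf{1}_p$, so $C_g=K$. Second, for \eqref{eq:functionconvergence} I \emph{divide \eqref{eq:intuitionofCDPP} by $V_{t+1}$ before telescoping}, so that the penalty carries unit weight: the rescaled proximal sum telescopes to $O(\sqrt T)$ because $\eta_{t+1}/V_{t+1}=\sqrt{t+1}$, the rescaled constant and consensus terms sum to $O(\sqrt T)$ because $1/V_{t+1}=1/\sqrt{t+1}$ has partial sums $O(\sqrt T)$, the rescaled buffer-coupling term is $O(\sqrt T)$ using $\|\bar\mu_t\|\le K\sqrt t$ from the first summation, and the rescaled drift $\sum_t\triangle_t/V_{t+1}\ge\|\bar\mu_T\|^2/(2\sqrt T)\ge 0$ (Abel) is discarded; this yields $\sum_{k\le T}(f(\mathbf{x}_k)-f(\mathbf{x}^*))\le O(\sqrt T)$, and Jensen gives $f(\bar{\mathbf{x}}_T)-f(\mathbf{x}^*)\le C_f/\sqrt T$ (the matching lower bound on the objective error comes from (a) and \eqref{eq:sqrtTconstrviolation}).

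Finally, for \eqref{eq:zeroconstriant} I keep the buffer term in Lemma~\ref{lem:cumulativeconstraintvion} rather than discarding it: componentwise, $\sum_i g_i(\bar x_{i,T})\le\tfrac1T\bar\mu_T-\tfrac{N}{T}\sum_{k=1}^T\gamma_k\mathbf{1}_p$, and using $\sum_{k=1}^T\gamma_k\ge 2C(\sqrt T-1)$ and $\|\bar\mu_T\|\le(K_0+K_1C)\sqrt T$ this becomes $\sum_i g_i(\bar x_{i,T})\le\bigl(\tfrac{K_0-(2N-K_1)C}{\sqrt T}+\tfrac{2NC}{T}\bigr)\mathbf{1}_p$. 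Choosing $C\ge C_0:=K_0/(2N-K_1)$ makes the $1/\sqrt T$ coefficient nonpositive, and then every $T\ge t_1$ --- with $t_1$ the explicit threshold past which the residual $2NC/T$ term is dominated --- gives $\sum_i g_i(\bar x_{i,T})\le\mathbf{0}_p$. The main obstacle is precisely this last step: the whole finite-time feasibility hinges on the queue-norm constant obeying $K=K_0+K_1C$ with slope \emph{strictly below} $2N$ (so that $C_0$ is finite and well defined), which forces the treatment of the cumulative buffer-coupling term $\sum_t\gamma_{t+1}\langle\bar\mu_t,\mathbf{1}_p\rangle$ and of the projection residuals to be done sharply --- in particular component-wise, exploiting that the aggregate queue stays close to a vector with (nearly) equal components so that the potentially lethal $\sqrt p$ blow-up in passing from $\langle\bar\mu_t,\mathbf{1}_p\rangle$ to $\|\bar\mu_t\|$ is avoided; a crude norm inequality there would push $K_1\ge 2N$ and destroy the result. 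Making the passage from the (rescaled) objective estimate to the unweighted running average $\bar x_{i,t}$ tight enough to land $C_f/\sqrt t$ is the other spot requiring genuine care rather than a routine bound.
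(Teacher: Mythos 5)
Your routes to \eqref{eq:functionconvergence} and \eqref{eq:sqrtTconstrviolation} are essentially sound and parallel the paper's (prox-step optimality plus the drift bound \eqref{eq:intuitionofCDPP}, divide by $V_{t+1}$ for the objective, Lemma~\ref{lem:cumulativeconstraintvion} plus a queue bound for the constraint), and your observation that the common buffer $\gamma_{t+1}\mathbf{1}_p$ cancels in the consensus error is correct and even slightly sharper than Lemma~\ref{lem:metworkmutbound}. The genuine gap is exactly where you flag it, and it is not a matter of doing the bookkeeping ``sharply'': the finite-time claim \eqref{eq:zeroconstriant} cannot be obtained from your energy/duality self-consistency argument at all. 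In your first summation the buffer-coupling term $\sum_t N\gamma_{t+1}\langle\bar\mu_t,\mathbf{1}_p\rangle$ contributes, under the ansatz $\|\bar\mu_t\|\le K\sqrt t$, a term of order $NCKT$ (already for $p=1$, so the $\sqrt p$ issue is a red herring), and the resulting self-consistency condition is roughly $\tfrac12K^2\le \|\lambda^*\|K+NCK+O(1)$, which forces $K_1\ge 2N$ in $K=K_0+K_1C$. Slope exactly $2N$ is achievable but useless: it gives $\sum_i g_i(\bar x_{i,T})\le K_0/\sqrt T+2NC/T$, which never becomes nonpositive, and your $C_0=K_0/(2N-K_1)$ is then undefined. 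The dual multiplier $\lambda^*$ only lower-bounds cumulative weighted violation; it does not prevent the queue from absorbing the buffer inflow $N\sum_k\gamma_k\approx 2NC\sqrt T$, so no refinement of this route can push the slope strictly below $2N$.

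What is actually needed --- and what the paper's Lemma~\ref{lem:boundofnormbarmut} supplies --- is a Slater-point drift argument, not a dual-multiplier bound: substitute the Slater point $\hat x_i$ (not $x_i^*$) into the prox-optimality inequality, and show that over windows of length $\tau=\lceil\sqrt t\rceil$, once $t$ exceeds a $C$-dependent burn-in time so that $\gamma_t=C/\sqrt t\le\epsilon/(2N)$, the queue has strictly negative expected drift whenever $\|\bar\mu_t\|$ exceeds a threshold $\theta_t(\tau)$ that does not involve $C$. This confines the entire $C$-dependence of the queue bound to the additive constant $C_2$ (from the burn-in phase), while the $\sqrt t$ coefficient $4\sigma+C_1$ is $C$-free; consequently the coefficient of $1/\sqrt t$ in \eqref{eq:boundofconsViolation} is $N(4\sigma+C_1-C)$, $C_0=4\sigma+C_1+1$ is finite and independent of $C$, and only $t_1$ grows with $C$. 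Without this window-drift mechanism (or some equivalent argument showing the queue's growth rate is insensitive to the buffer once $\gamma_t$ is small relative to the Slater margin), your proposal proves \eqref{eq:functionconvergence}--\eqref{eq:sqrtTconstrviolation} but not \eqref{eq:zeroconstriant}, which is the theorem's main novelty.
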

\begin{proof}
	See Appendix~\ref{prf:convergenceresults}.
\end{proof}
%先给出收敛性说明并与其他相关论文比较

Theorem~\ref{thm:convergencereuslts} shows that B-DPP achieves $O(1/\sqrt{t})$ ergodic convergence rates in terms of optimality and feasibility under mild parameter conditions. Table~\ref{table:comparison02} compares the convergence results of B-DPP and the alternative distributed methods that are also applicable to time-varying directed networks, in which  \cite{falsone2017dual,chang2014distributed,mateos2016distributed,lee2017sublinear,li2020distributed} achieve ergodic convergence like B-DPP. Specifically,  \cite{falsone2017dual,chang2014distributed} only achieve asymptotic convergence and \cite{li2020distributed} has worse convergence rates than ours. Although \cite{mateos2016distributed, lee2017sublinear} have $O(1/\sqrt{t})$ convergence rates, \cite{mateos2016distributed} requires that all agents know a Slater point and \cite{lee2017sublinear} imposes a strict assumption on the boundedness of dual variables while B-DPP does not. In comparison with \cite{camisa2021distributed} that has non-ergodic convergence, B-DPP enjoys faster convergence to objective error and additionally provides the convergence rate with respect to the constraint violation. Besides, \cite{camisa2021distributed} also requires the knowledge of a Slater point to satisfy its convergence assumption.
As is shown Table~\ref{table:comparison02},  \cite{chang2014distributed,mateos2016distributed,lee2017sublinear,li2020distributed} need to maintain more variables than B-DPP and thus consume more memory.

Notably, \eqref{eq:zeroconstriant} implies that with a proper $\gamma_t$, B-DPP achieves zero constraint violation within a finite number of iterations. To the best of our knowledge, this is the first result of guaranteed finite-time feasibility in distributed optimization with a coupled inequality constraint. It indicates that B-DPP may quickly reach feasibility, which is important to the real-world networks when addressing problems like resource allocation.

%a pioneering result that shows the feasibility of coupled inequality constraints within a finite number of iterations in distributed sceneries. The resultpotentially has fewer constraint violations and achieves feasibility within a finite number of iterations, which is significant for  problems in  practical systems. 

%Different from \cite{notarnicola2019constraint,liang2019distributed,falsone2017dual,chang2014distributed} only with asymptotic convergence, Theorem~\ref{thm:convergencereuslts} provides specific convergence rates. Compared to \cite{liu2020unitary,liang2019Cyber,bose2021distributed,simonetto2016primal,camisa2021distributed,lee2017sublinear,li2020distributed, wu2022distributed}, B-DPP enjoys faster convergence rates and can be applied to time-varying directed networks. Though the algorithm in \cite{mateos2016distributed} has the same convergence rates, they require a Slater point exists and is known to all agents, while existence only is required for B-DPP.  %要把non-ergodic 单独说一下，尽管一些能达到non-ergodic收敛，但是他们的收敛速度比较慢，（Bose,Camisa）

%remark说明参数选择
\begin{remark}\label{rmk:Cstepsizetradoff}
	From Appendix~\ref{prf:convergenceresults},	the constant $C_0$ in Theorem~\ref{thm:convergencereuslts}  only involves the network information $N, B, a$ and the problem information $ F, G, R, p$ whose values are known. Thus, it is almost effortless to calculate $C_0$ and set $C \ge C_0$. Likewise, the time length needed to reach feasibility, i.e., $t_1$, can also be easily obtained since it only involves the above mentioned quantities and $C$.  
\end{remark}
%第二个remark解释收敛速率和C之间关系
\begin{remark}\label{rmk:Ctradeoff}
	The constant $C$ that determines the buffer parameter $\gamma_t$ leads to a trade-off between
	objective error and constraint violation. On one hand, when $C$ is sufficiently large, according to \eqref{eq:zeroconstriant} the constraint violation becomes zero after $t_1$ iterations, but the objective error bound given in \eqref{eq:functionconvergence} would be substantial since the constant $C_f$ is proportional to $C$ as is shown in Appendix~\ref{prf:convergenceresults}. On the other hand, when $C$ is smaller than the given constant $C_0$, the objective error bound in \eqref{eq:functionconvergence} can be small, but the constraint violation can only be bounded by a diminishing positive sequence and cannot be guaranteed to reach zero in finite time according to \eqref{eq:sqrtTconstrviolation}.   
\end{remark}

\addtolength{\topmargin}{0.045in}
\section{Numerical Experiment}\label{sec: numericalexperiments}
Consider a 5G virtualized network system consisting of $N$ slices, each of which is associated with $M$ virtual network functions (VNF) distributed geographically in $K$ data centers (DC). Each VNF provides wireless services in its slice to ensure wireless network access to the user attached to that slice. Each DC offers $l$ types of resources (i.e., CPU, bandwidth, and storage), with capacity limits denoted as $R_k \in \mathbb{R}^l$  for each DC $k$, $\forall k \in \{1,2,\dots, K\}$. Assume that each VNF $ m$, $\forall m \in \{1, \dots, M\}$ of the slice $i$, $\forall i \in \{1,\dots,N\}$ requires the resource  $d_i^{k,m} \in \mathbb{R}^l$ from DC $k$ to process a unit of wireless service request. Let $x_i$ be the slice thickness variable that represents the amount of wireless service for slice $i$ and thus $x_id_i^{k,m}$ is the amount of resources allocated to slice $i$\cite{halabian2019distributed}. The resource allocation problem in 5G virtualized networks
aims to determine the optimal amount of resources allocated to each slice such that the sum of cost functions of slice thicknesses is minimized while satisfying the resource constraints, which can be formulated as:
\begin{equation} \label{eq:numericalprob}
	\begin{array}{cl}
		\underset{\substack{x_i, \forall i \in \mathcal{V} }}{\operatorname{minimize}} ~\!\!\!& \sum_{i=1}^N f_i\left(x_i\right)=\frac{1}{2} \sum_{i=1}^N\left(x_i-a_i\right)^2\\  %\vspace{1mm}  
		{\operatorname{ subject~to }}\!\!\! & \sum_{i=1}^N \sum_{m=1}^M x_i d_i^{k, m} \leq R_k, \forall k ,\\\vspace{2mm} 
		{} &  0\leq x_i \leq 2, \quad \forall i.
	\end{array}
\end{equation} 
Set $N=10$ and $l=K=M=1$. The 5G virtualized network is modeled as a time-varying directed graph with $B$-connectivity. Let $B=4$. We randomly generate $a_i \in [1,2]$, $d_i^{1,1} \in [0.5,1]$, $\forall i \in \{1, \dots, 10\}$, and $R_1 \in [5,20]$.
%接下来写两段的图
\begin{figure}[t] 	
	\subfigure[]%[Time-averaged regret] %第一张子图
	{
		\begin{minipage}{0.466\linewidth}
			\centering 
			\includegraphics[height=0.75\linewidth,width=\linewidth]{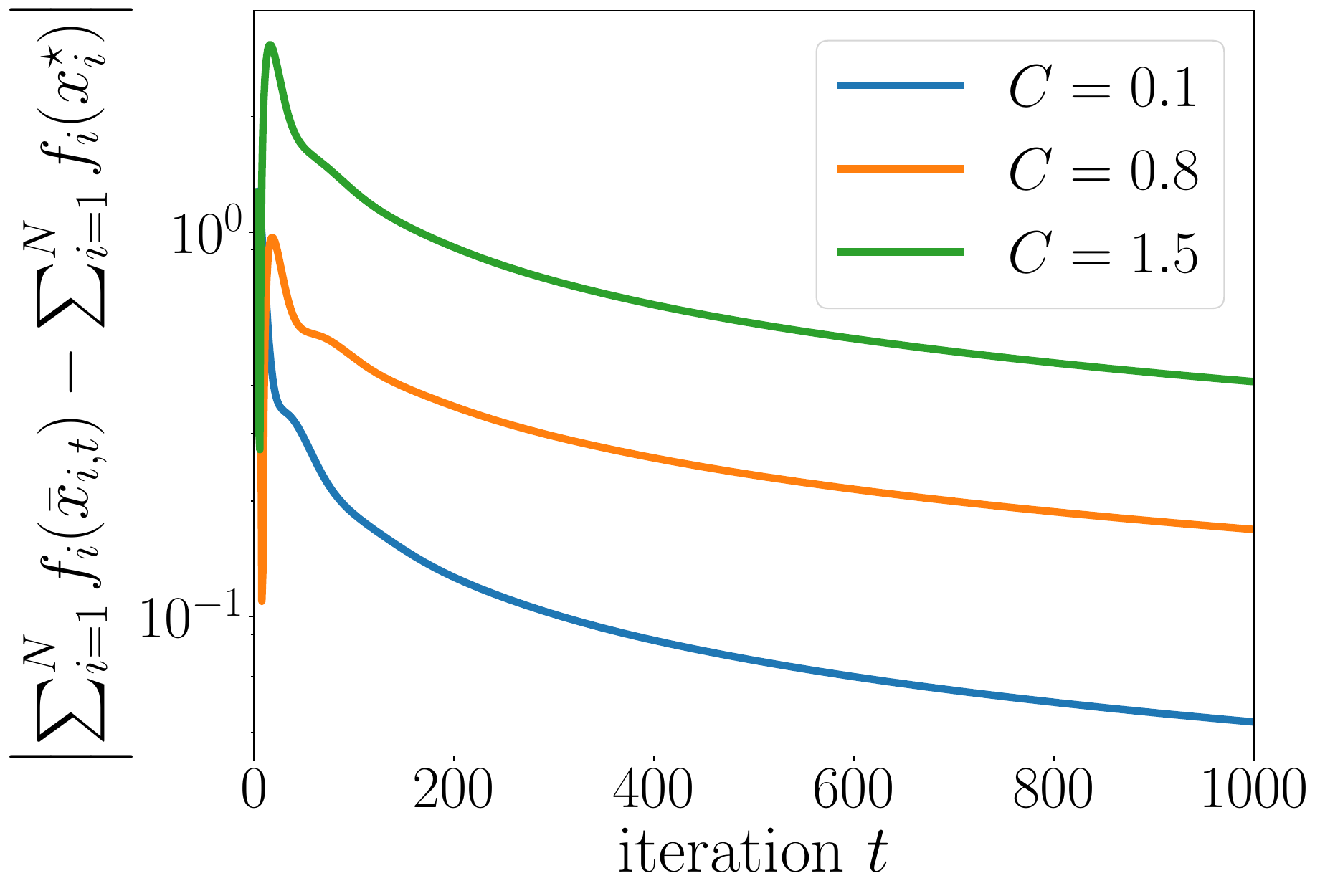}
			\label{cooperation}
		\end{minipage}
	}	
	\subfigure[]%[Time-averaged constraint violation] %第二张子图
	{
		\begin{minipage}{0.466\linewidth}
			\centering      %子图居中
			\includegraphics[height=0.75\linewidth,width=\linewidth]{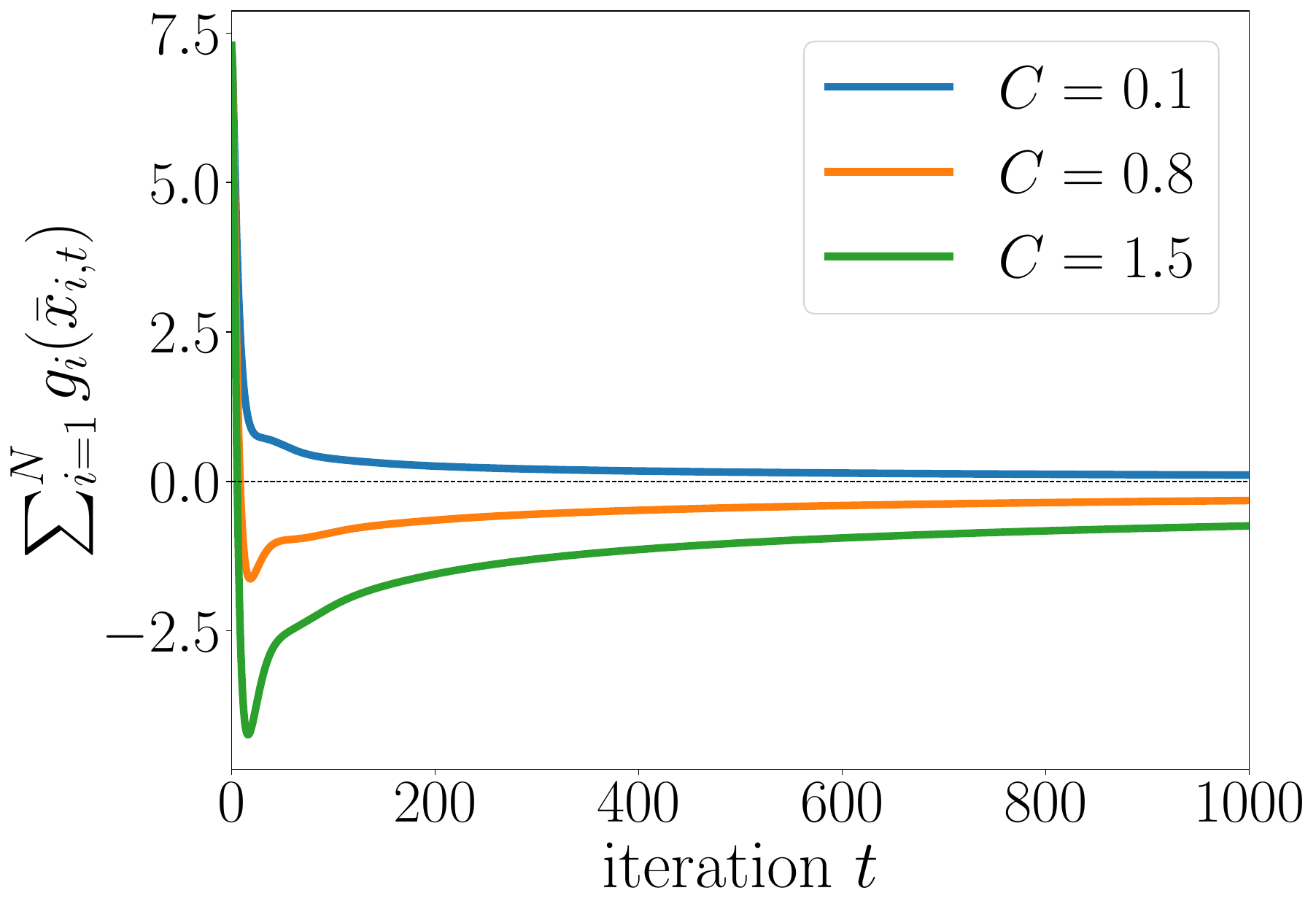}
			\label{eachlatency}
		\end{minipage}
	}	
	\caption{Effects of different buffer parameter $\gamma_t=C/\sqrt{t}$ on (a) objective error and (b) constraint violation. } 
	\label{differentC}  %图片引用标记
\end{figure}
\begin{figure}[t] 	
	\subfigure[]%[Time-averaged regret] %第一张子图
	{
		\begin{minipage}{0.466\linewidth}
			\centering 
			\includegraphics[height=0.75\linewidth,width=\linewidth]{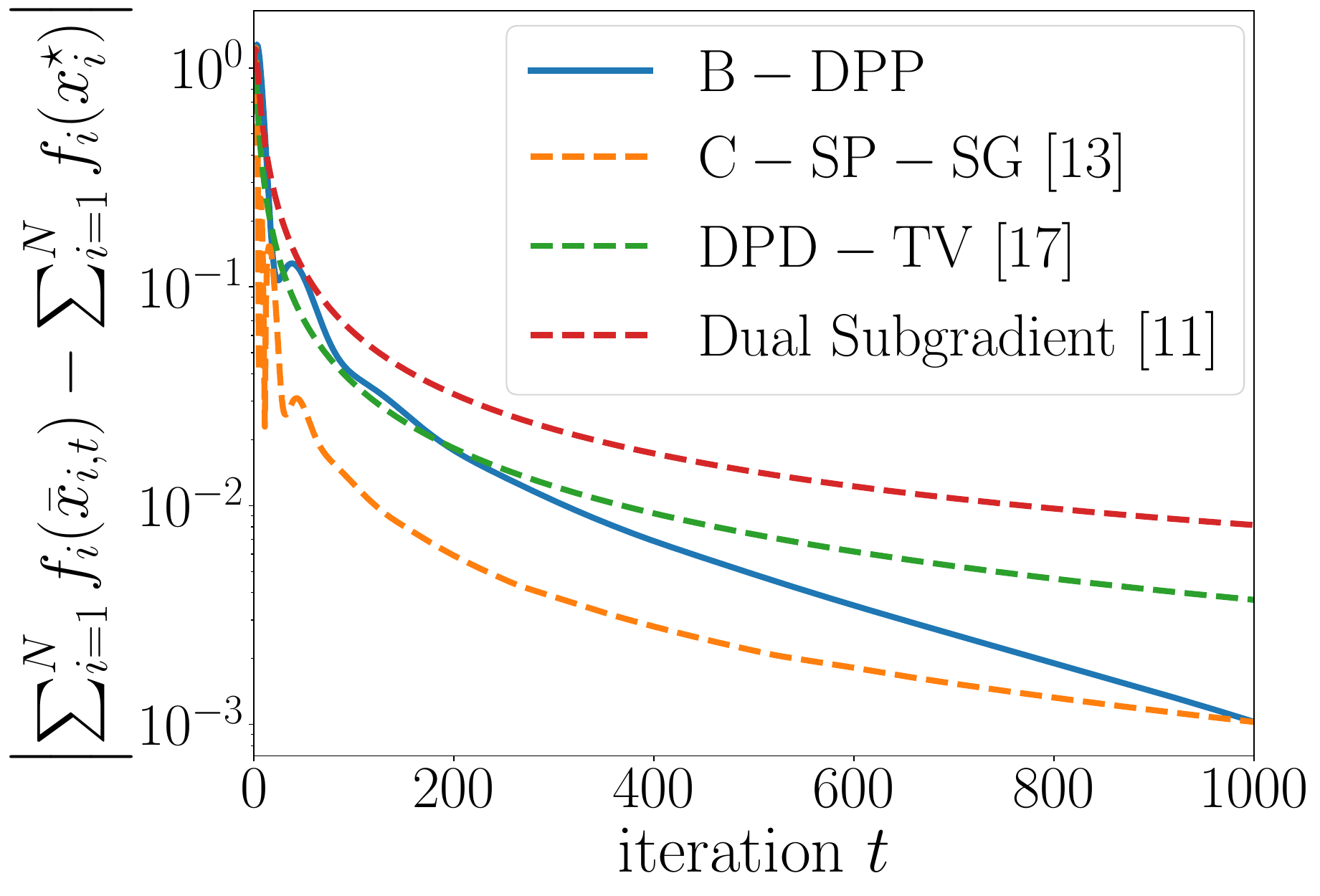}
			\label{regdifferentN}
		\end{minipage}
	}	
	\subfigure[]%[Time-averaged constraint violation] %第二张子图
	{
		\begin{minipage}{0.466\linewidth}
			\centering      %子图居中
			\includegraphics[height=0.75\linewidth,width=\linewidth]{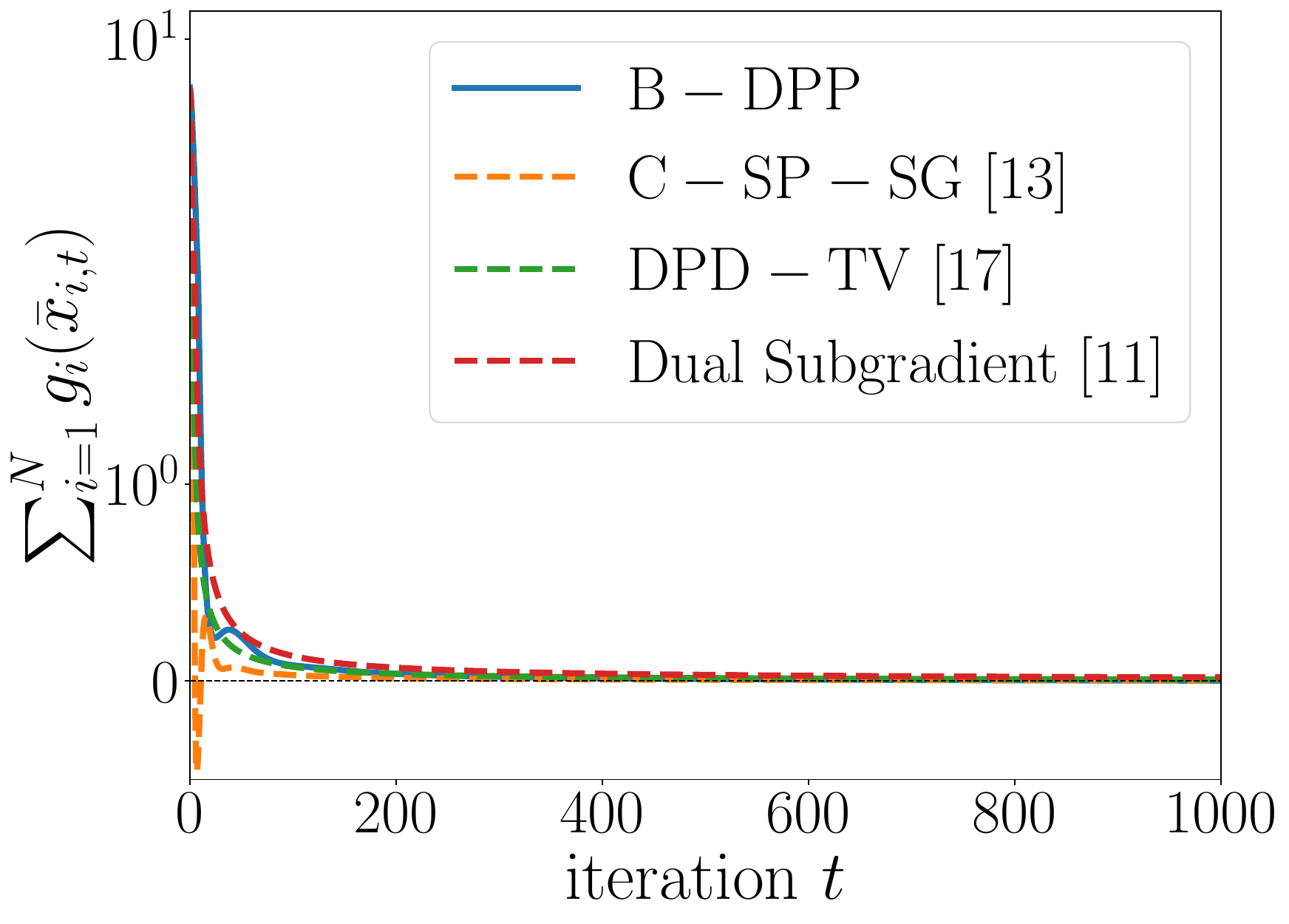}
			\label{cvdifferentN}
		\end{minipage}
	}	
	\caption{ Convergence performance of B-DPP and three alternative
		methods on (a) objective error and (b) constraint violation. } 
	\label{comparsion}  %图片引用标记
\end{figure} 

Fig~\ref{differentC} (a) and (b) plot the objective error and the value of coupled constraint generated by B-DPP with different buffer parameter $\gamma_t=C/\sqrt{t}$, where $x_i^*$, $\forall i$ is the optimal value of problem \eqref{eq:numericalprob} calculated by CVXPY\cite{diamond2016cvxpy}. Fig~\ref{differentC} shows B-DPP achieves convergence. We can see that as $C$ increases, the objective value error increases while the constraint violation decreases. In the case of small $C$, the value of coupled constraint approaches zero from positive values. With large $C$, the value of the coupled constraint is less than zero, that is, B-DPP reaches feasibility, within 100 iterations. These facts demonstrate the theoretical results in Theorem~\ref{thm:convergencereuslts}.

We compare B-DPP with the distributed dual subgradient method \cite{falsone2017dual}, C-SP-SG\cite{mateos2016distributed}, and DPD-TV\cite{camisa2021distributed}.
For a fair comparison, the performances of the four algorithms are all evaluated over the running average of primal variables. All algorithm parameters are fine-tuned to achieve their possibly best performance, where $C=0.27$ in B-DPP, the Slater point $\hat{x}_i=1$, $\forall i$ is used in \cite{mateos2016distributed}, and the stepsizes in \cite{falsone2017dual}, \cite{camisa2021distributed} are chosen as $\frac{4.5}{t+1}$, $\frac{0.1}{(t+1)^{0.6}}$, respectively. Fig~\ref{comparsion}(a) and (b) illustrate that B-DPP exhibits similar convergence rates to C-SP-SG\cite{mateos2016distributed} and outperforms the others, confirming its competitive performance.

\section{Conclusion}\label{sec:conclusion}
We have proposed a buffering drift-plus-penalty (B-DPP) algorithm to solve distributed optimization problems with coupled inequality constraints over time-varying directed networks. By introducing a local virtual queue variable for each agent to track cumulative constraint violations and integrating a buffer parameter with the local virtual queue updates, the proposed B-DPP algorithm extends the centralized drift-plus-penalty (DPP) method to distributed sceneries and provides flexibility in solution accuracy and feasibility via adjusting the buffer parameter. B-DPP is shown to achieve $O(1/\sqrt{t})$ convergence rates in terms of objective error and constraint violation. With a proper buffer parameter, B-DPP reaches feasibility within a finite number of iterations. Its superior convergence performance is demonstrated by numerical experiments.  

%This paper focuses on distributed constrained optimization over time-varying directed networks, where all agents cooperate to optimize the sum of their locally accessible objective functions subject to a coupled inequality constraint consisting of all their local constraint functions. To address this problem, we develop a buffering drift-plus-penalty algorithm, referred to as B-DPP. The proposed B-DPP algorithm utilizes the idea of minimizing a drift-plus-penalty term in centralized optimization to control constraint violation and global objective value and adapts it to the distributed setting. It also innovatively incorporates a buffer variable into local virtual queue updates to acquire flexible and desirable tracking of constraint violation. We show that B-DPP achieves $O(1/\sqrt{t})$ rates of convergence to optimality and feasibility, which outperforms the alternative methods in the literature. Moreover, with a proper buffer parameter, B-DPP is capable of achieving feasibility within a finite number of iterations, which is a pioneering achievement result in the area. Simulation results on a resource allocation problem over 5G virtualized networks demonstrate the competitive convergence performance and efficiency of B-DPP. 
\appendix
%0801 把lemma 1的证明简写，然后给出theorem，在theorem 1证明中线给出mu_t的bound，然后再给出primal 收敛，再给出约束违反
\subsection{Proof of Equation~\eqref{eq:intuitionofCDPP}} \label{prf:boundoftriangle}
%Let $\epsilon_{i,t+1}=[\hat{\mu}_{i, t}+ g_i(x_{i,t+1})]_++\gamma_{t+1}\mathbf{1}_p-\hat{\mu}_{i, t}$. With this, we rewrite  \eqref{eq:offlineupdateofuk} as $\mu_{i,t+1}=\hat{\mu}_{i, t}+\epsilon_{i,t+1}$, which gives $\bar{\mu}_{t+1}=\bar{\mu}_{t}+\sum_{i=1}^{N}\epsilon_{i,t+1}$.
Similar to \cite[Lemma~4]{wang2023distributed}, we derive
% For all $\lambda \in \mathbb{R}_+^p$,
%\begin{align}
%	&\|\bar{\mu}_{t+1}\!\!-\!\!\lambda \|^2 =\|\bar{\mu}_{t}+\sum_{i=1}^{N}\epsilon_{i,t+1}-\lambda \|^2 \notag \displaybreak[0]\\
%	&\le\! \|\bar{\mu}_t\!-\!\lambda \|^2\!+\!2\sum_{i=1}^{N}\epsilon_{i,t+1}^T(\bar{\mu}_t\!-\!\lambda)\!+\!N\sum_{i=1}^{N}\|\epsilon_{i,t+1}\|^2.\displaybreak[0]\label{eq:barmut1withepsilonbarmut}
%\end{align}  
%Similar to  \cite[Lemma~4]{wang2023distributed} and \cite[eq.(29)]{wang2023distributed}, we derive 
%\begin{align}
%	&\|\epsilon_{i,t+1}\| \le F+\sqrt{p}\gamma_{t+1}, \displaybreak[0] \label{eq:boundofepsilont}	\\
%	&\epsilon_{i,t+1}^T(\bar{\mu}_t-\lambda) \le (F\!+\!2\sqrt{p}\gamma_{t+1})\|\epsilon_{i,t+1}\|\!+\!\!F\sqrt{p}\gamma_{t+1} \notag \displaybreak[0]\\
%	&+\sqrt{p}\gamma_{t+1}\|\bar{\mu}_t\!-\!\hat{\mu}_{i,t}\|\!+\!(\gamma_{t+1}\mathbf{1}_p)^T(\bar{\mu}_{t}\!-\!\lambda)\!+\!p\gamma_{t+1}^2
%	\displaybreak[0]\notag\\
%	&+g_i(x_{i,t+1})^T(\hat{\mu}_{i, t}-\lambda)\!+\!\|\epsilon_{i,t+1}\|\|\bar{\mu}_t-\hat{\mu}_{i,t}\|.\displaybreak[0] \label{eq:epsilonmultiplybarmutlambdabound}
%\end{align} 
%Let $\lambda=\mathbf{0}_p$ and substituting  \eqref{eq:boundofepsilont}--\eqref{eq:epsilonmultiplybarmutlambdabound} into \eqref{eq:barmut1withepsilonbarmut} yields 
\begin{align}
	& \triangle_t\le 2NF^2+4NFp\gamma_{t+1}\!+\!4Np\gamma_{t+1}^2+N\langle\bar{\mu}_t, \gamma_{t+1}\mathbf{1}_p\rangle\displaybreak[0]\notag\\ 
	&\!+\!(F\!\!+\!\!2\sqrt{p}\gamma_{t+1})\!\sum_{i=1}^{N}\|\bar{\mu}_t\!-\!\hat{\mu}_{i,t}\|\!\!+\!\!\sum_{i=1}^{N}\langle \hat{\mu}_{i, t},~ g_i(x_{i,t+1})\rangle\!. \displaybreak[0]\label{eq:boundoftriangle}
\end{align}
Adding $V_{t+1}\sum_{i=1}^{N}f_i(x_{i,t+1})+\eta_{t+1}\sum_{i=1}^{N}\|x_{i,t+1}-x_{i,t}\|^2$ to both sides of \eqref{eq:boundoftriangle} gives \eqref{eq:intuitionofCDPP}.

\subsection{Proof of Theorem~\ref{thm:convergencereuslts}} \label{prf:convergenceresults}
The following lemmas support the proof of Theorem~\ref{thm:convergencereuslts}.
%Based on the analysis of B-DPP's intuition in Section~\ref{sec:B-DPPalgorithm}, the following three lemmas presents the upper bounds on the minimized function in \eqref{eq:offlineupdateofxt}, $\sum_{i=1}^{N}\|\bar{\mu}_t\!-\!\hat{\mu}_{i,t}\|$, and $\|\bar{\mu}_t\|$, respectively, where the first bound involves $f_i(x_i^*)$ and the others determines upper bounds on the drift-plus-penalty expression according to \eqref{eq:intuitionofCDPP}, which are used to prove Theorem~\ref{thm:convergencereuslts}.

\begin{lemma}\label{lem:strongconvSit}
	Suppose Assumptions \ref{asm:probassumption}--\ref{ass: networkassumption} hold. Let $S_{i,t}(x_i,\hat{\mu}_{i})= V_{t+1}f_i(x_i)+\langle \hat{\mu}_{i}, ~ g_i(x_{i})\rangle\!+\eta_{t+1}\|x_i\!-\!x_{i,t}\|^2$. Then, 
	for any $t \ge 0$ and arbitrary $\tilde{x}_{i,t} \in X_i$,
	\begin{align}
		S_{i,t}(x_{i,t+1},\hat{\mu}_{i,t})&\le S_{i,t}(\tilde{x}_{i,t},\bar{\mu}_{t})\!-\!\eta_{t+1}\|x_{i,t+1}-\tilde{x}_{i,t}\|^2\notag \displaybreak[0]\\
		&+\!F\|\bar{\mu}_t\!-\!\hat{\mu}_{i,t}\|. \displaybreak[0] \label{eq:strongSit}
	\end{align}
\end{lemma}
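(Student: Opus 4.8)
The plan is to use the strong convexity that the proximal term $\eta_{t+1}\|x_i-x_{i,t}\|^2$ adds to the objective minimized in~\eqref{eq:offlineupdateofxt}, together with Cauchy--Schwarz and the uniform bound $\|g_i(x_i)\|\le F$ on $X_i$ to absorb the discrepancy between $\hat{\mu}_{i,t}$ and $\bar{\mu}_t$.

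First I would verify that $x_i\mapsto S_{i,t}(x_i,\hat{\mu}_{i,t})$ is $2\eta_{t+1}$-strongly convex on $X_i$: by Assumption~\ref{asm:probassumption}\eqref{convexassumption} the map $x_i\mapsto V_{t+1}f_i(x_i)$ is convex, and since $\hat{\mu}_{i,t}=\sum_{j\in\mathcal{N}_{i,t}^{\text{in}}}w_{ij,t}\mu_{j,t}\ge\mathbf{0}_p$ (nonnegative weights, together with $\mu_{j,t}\ge\mathbf{0}_p$ for all $t$ by~\eqref{eq:offlineupdateofuk} and $\mu_{j,0}=\mathbf{0}_p$) while $g_i$ is convex componentwise, the map $x_i\mapsto\langle\hat{\mu}_{i,t},g_i(x_i)\rangle$ is convex as well; adding $\eta_{t+1}\|x_i-x_{i,t}\|^2$ then supplies the strong-convexity modulus $2\eta_{t+1}$. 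Because $x_{i,t+1}$ minimizes $S_{i,t}(\cdot,\hat{\mu}_{i,t})$ over the convex set $X_i$, the standard optimality inequality for constrained minimization of a strongly convex function gives, for every $\tilde{x}_{i,t}\in X_i$,
\[
S_{i,t}(x_{i,t+1},\hat{\mu}_{i,t})\le S_{i,t}(\tilde{x}_{i,t},\hat{\mu}_{i,t})-\eta_{t+1}\|x_{i,t+1}-\tilde{x}_{i,t}\|^2 .
\]

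Next I would replace $\hat{\mu}_{i,t}$ by $\bar{\mu}_t$ on the right-hand side. Since the only $\mu$-dependent part of $S_{i,t}(\tilde{x}_{i,t},\cdot)$ is the linear term $\langle\cdot,g_i(\tilde{x}_{i,t})\rangle$,
\[
S_{i,t}(\tilde{x}_{i,t},\hat{\mu}_{i,t})=S_{i,t}(\tilde{x}_{i,t},\bar{\mu}_t)+\langle\hat{\mu}_{i,t}-\bar{\mu}_t,\,g_i(\tilde{x}_{i,t})\rangle\le S_{i,t}(\tilde{x}_{i,t},\bar{\mu}_t)+F\|\bar{\mu}_t-\hat{\mu}_{i,t}\| ,
\]
by Cauchy--Schwarz and $\|g_i(\tilde{x}_{i,t})\|\le F$ (noted right after Assumption~\ref{asm:probassumption}). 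Substituting this into the previous display yields~\eqref{eq:strongSit}.

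This argument is essentially routine. The only point that needs a little care is the precise form of the strong-convexity optimality inequality --- namely that if $h$ is $\sigma$-strongly convex and $x^\star$ minimizes $h$ over a convex set $X$, then $h(x^\star)\le h(y)-\tfrac{\sigma}{2}\|x^\star-y\|^2$ for all $y\in X$, which follows by combining the variational inequality characterising $x^\star$ with the strong-convexity lower bound --- and bookkeeping the fact that here $\sigma=2\eta_{t+1}$. I do not anticipate any genuine obstacle.
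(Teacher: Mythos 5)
Your proof is correct and follows essentially the same route as the paper's (very terse) argument: strong convexity of $x_i\mapsto S_{i,t}(x_i,\hat{\mu}_{i,t})$ with modulus $2\eta_{t+1}$ plus the optimality of $x_{i,t+1}$, and then the swap from $\hat{\mu}_{i,t}$ to $\bar{\mu}_t$ via Cauchy--Schwarz and $\|g_i(\tilde{x}_{i,t})\|\le F$, which the paper leaves implicit. Your additional checks (nonnegativity of $\hat{\mu}_{i,t}$ ensuring convexity of $\langle\hat{\mu}_{i,t},g_i(\cdot)\rangle$, and the exact form of the strongly convex optimality inequality) are exactly the details needed to make the paper's one-line proof rigorous.
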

\begin{proof} 
	Since $S_{i,t}(x_i,\hat{\mu}_{i,t})$ is strongly convex and $x_{i,t+1}$ is its minimum, Lemma~\ref{lem:strongconvSit} holds.
\end{proof}
%\begin{lemma}\label{lem:boundoftrianglet}
%	Suppose Assumptions \ref{asm:probassumption}--\ref{ass: networkassumption} hold. Then, 
%	for any $t \ge 0$ and arbitrary $\tilde{x}_{i,t} \in X_i$,
%	\begin{align}
	%	& \triangle_t\le 2NF^2+4NFp\gamma_{t+1}\!+\!4Np\gamma_{t+1}^2+\frac{G^2V_{t+1}^2}{4\eta_{t+1}} \notag \displaybreak[0] \\
	%	&+\!(2F\!+\!2\sqrt{p}\gamma_{t+1})\!\sum_{i=1}^{N}\|\bar{\mu}_t\!-\!\hat{\mu}_{i,t}\|+N\langle\bar{\mu}_t, \gamma_{t+1}\mathbf{1}_p\rangle\displaybreak[0] \notag\\ 
	%	&+V_{t+1}\sum_{i=1}^N f_{i}(\tilde{x}_{i,t})-f_i(x_{i,t+1})+\langle \bar{\mu}_t, ~\sum_{i=1}^Ng_i(\tilde{x}_{i,t})\rangle \notag\\
	%	&+\sum_{i=1}^N \eta_{t+1}\|x_{i,t}\!-\!\tilde{x}_{i,t}\|^2-\!\eta_{t+1}\|x_{i,t+1}\!-\!\tilde{x}_{i,t}\|^2.\label{eq:barmut1barmutnormbound}
	%	\end{align}
%\end{lemma}
\begin{lemma} \label{lem:metworkmutbound}
	Suppose Assumptions \ref{asm:probassumption}--\ref{ass: networkassumption} hold. Then, 
	for any $t \ge 1$, we have
	\begin{align}
		\sum_{i=1}^{N}\|\bar{\mu}_t\!\!-\!\!\hat{\mu}_{i,t}\| \!\le\! 2NF\!\!+\!\!\frac{N^2Fr\!+\!N^2r\sqrt{p}\gamma_1}{1\!-\!\beta}\!+\!2N\!\sqrt{p}\gamma_{t}, \displaybreak[0] \label{eq:boundofbarmuthatmuit}	
	\end{align}
	where $r=\left(1-\frac{a}{2 N^2}\right)^{-2}$, $\beta=\left(1-\frac{a}{2 N^2}\right)^{1/B}$.
\end{lemma}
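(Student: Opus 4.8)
\textbf{Proof proposal for Lemma~\ref{lem:metworkmutbound}.}

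\textbf{Overall approach.} The quantity $\sum_{i=1}^N\|\bar\mu_t-\hat\mu_{i,t}\|$ measures the disagreement between the individual fused queues $\hat\mu_{i,t}$ and their (scaled) average. Since the update \eqref{eq: updateofhatmuit} is a doubly-stochastic mixing step and \eqref{eq:offlineupdateofuk} adds the local "innovation" $\max\{\hat\mu_{i,t}+g_i(x_{i,t+1}),\mathbf 0_p\}-\hat\mu_{i,t}+\gamma_{t+1}\mathbf 1_p$ to each queue, the plan is to treat $\{\mu_{i,t}\}$ as the output of a push-sum/average-consensus recursion driven by bounded perturbations, and to invoke the standard geometric-mixing estimate for $B$-connected doubly-stochastic chains. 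Concretely, I would write $\mu_{i,t+1}=\sum_{j}w_{ij,t}\mu_{j,t}+e_{i,t+1}$ where $e_{i,t+1}:=\max\{\hat\mu_{i,t}+g_i(x_{i,t+1}),\mathbf 0_p\}-\hat\mu_{i,t}+\gamma_{t+1}\mathbf 1_p$, and bound $\|e_{i,t+1}\|\le \|g_i(x_{i,t+1})\|+\sqrt p\,\gamma_{t+1}\le F+\sqrt p\,\gamma_{t+1}$, using that $\|\max\{a+b,0\}-a\|\le\|b\|$ componentwise and Assumption~\ref{asm:probassumption}.

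\textbf{Key steps in order.} First, unroll the recursion: letting $\Phi(t,s)$ denote the transition matrix $W_{t-1}\cdots W_s$, write $\mu_{i,t}=\sum_j[\Phi(t,0)]_{ij}\mu_{j,0}+\sum_{s=1}^{t}\sum_j[\Phi(t,s)]_{ij}e_{j,s}$, and since $\mu_{i,0}=\mathbf 0_p$ the first term vanishes. Also $\bar\mu_t=\sum_i\mu_{i,t}=\sum_{s=1}^t\sum_j e_{j,s}$ by double stochasticity. Second, form the difference $\mu_{i,t}-\tfrac1N\bar\mu_t=\sum_{s=1}^t\sum_j\big([\Phi(t,s)]_{ij}-\tfrac1N\big)e_{j,s}$ and apply the geometric decay bound $\big|[\Phi(t,s)]_{ij}-\tfrac1N\big|\le r\,\beta^{\,t-s}$ with $r,\beta$ as stated — this is the classical estimate for $B$-strongly-connected doubly-stochastic sequences with weights bounded below by $a$. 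Third, sum over $i$ and $j$, bound $\|e_{j,s}\|\le F+\sqrt p\,\gamma_s$, and split the resulting double sum $\sum_{s=1}^{t}N^2 r\beta^{t-s}(F+\sqrt p\,\gamma_s)$: the $F$-part gives $\frac{N^2 rF}{1-\beta}$ via the geometric series, and the $\gamma_s$-part is handled by noting $\gamma_s\le\gamma_1$ for all $s\ge1$, giving $\frac{N^2 r\sqrt p\,\gamma_1}{1-\beta}$. Fourth, relate $\sum_i\|\bar\mu_t-\hat\mu_{i,t}\|$ to $\sum_i\|\tfrac1N\bar\mu_t-\mu_{i,t}\|$ plus the one-extra-mixing-step gap: since $\hat\mu_{i,t}=\sum_j w_{ij,t}\mu_{j,t}$, write $\bar\mu_t-\hat\mu_{i,t}=(\bar\mu_t-N\cdot\tfrac1N\bar\mu_t)+\sum_j w_{ij,t}(\tfrac1N\bar\mu_t-\mu_{j,t})+\text{(correction)}$, or more cleanly apply the same $\Phi(t+1,s)$-type bound directly to $\hat\mu_{i,t}$, absorbing the final $W_t$ into the transition matrix. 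The leftover terms $2NF$ and $2N\sqrt p\,\gamma_t$ should emerge from the last-step perturbation $e_{j,t}$ (or $e_{j,t+1}$) that has not yet been mixed, bounded crudely by $N\cdot 2\cdot(F+\sqrt p\,\gamma_t)$ using $\gamma$ monotonicity on the residual index; I would track indices carefully here so the $\gamma_t$ (not $\gamma_1$) shows up in that residual.

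\textbf{Main obstacle.} The delicate point is the bookkeeping around the last mixing step and the max-projection in \eqref{eq:offlineupdateofuk}: one must confirm that the "innovation" $e_{i,t+1}$ is genuinely bounded by $F+\sqrt p\,\gamma_{t+1}$ uniformly (independent of how large $\mu_{i,t}$ itself is), which hinges precisely on the identity $\|\max\{a+b,\mathbf 0_p\}-a\|\le\|b\|$ — this is what prevents the recursion from blowing up and is the conceptual heart of why the buffering/queue structure keeps disagreement bounded. A second, more mechanical obstacle is citing the right form of the geometric-mixing lemma: I would lean on the standard result (e.g., as used in \cite{lee2017sublinear,li2020distributed} or Nedić–Ozdaglar-type analyses) that under Assumptions~\ref{Bconnect}–\ref{ass: networkassumption} the entries of $\Phi(t,s)$ converge to $1/N$ at rate $r\beta^{t-s}$ with exactly $r=(1-\tfrac{a}{2N^2})^{-2}$, $\beta=(1-\tfrac{a}{2N^2})^{1/B}$, and then the remaining work is just the geometric-series summation and the triangle-inequality splitting described above. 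I do not expect the algebra after the mixing bound to pose any real difficulty; getting the constants to match the stated $2NF+\frac{N^2Fr+N^2r\sqrt p\,\gamma_1}{1-\beta}+2N\sqrt p\,\gamma_t$ exactly is the only thing requiring care.
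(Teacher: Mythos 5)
Your proposal is correct and follows exactly the route the paper relies on: the paper does not prove this lemma itself but simply cites \cite[eq.~(49)]{lee2017sublinear}, whose underlying argument is precisely your perturbed-consensus derivation — write $\mu_{i,t+1}=\sum_j w_{ij,t}\mu_{j,t}+e_{i,t+1}$ with $\|e_{i,t+1}\|\le F+\sqrt{p}\,\gamma_{t+1}$ (valid because $\hat{\mu}_{i,t}\ge\mathbf{0}_p$, so the max-projection identity applies), unroll with the transition matrices, and invoke the geometric-mixing estimate $|[\Phi(t,s)]_{ij}-1/N|\le r\beta^{t-s}$ with the stated $r,\beta$ under Assumptions~\ref{Bconnect}--\ref{ass: networkassumption}, then sum the geometric series and bound $\gamma_s\le\gamma_1$. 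The only care point is the one you already flag (bookkeeping of the final $W_t$ step and the unmixed last innovations, which produce the $2NF+2N\sqrt{p}\,\gamma_t$ slack, noting that $\bar{\mu}_t$ must be read as the network average for the disagreement to be bounded), so your argument matches the cited proof in both structure and constants.
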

\begin{proof}
	Please refer to \cite[eq.(49)]{lee2017sublinear}
	%	Based on , $\sum_{i=1}^{N}\|\bar{\mu}_t-\hat{\mu}_{i,t}\| \le \sum_{i=1}^N\left\|\mu_{i, t}-\bar{\mu}_t\right\|$. 
	%	By following the proof of \cite[Theorem 4.2]{sundhar2012new} and using $\mu_{i,t+1}=\hat{\mu}_{i, t}+\epsilon_{i,t+1}$, $\mu_{i,0}=\mathbf{0}_p$,
	%	\begin{align}
		%		\|\mu_{i, t}\!-\!\bar{\mu}_{t}\| \leq r\sum_{\ell=1}^{t-1} \beta^{t-1-\ell} \sum_{j=1}^N\left\|\epsilon_{j, \ell}\right\|\!+\!\frac{1}{N} \sum_{j=1}^N\left\|\epsilon_{j, t}\right\|+\left\|\epsilon_{i,t}\right\|. \notag \displaybreak[0]
		%	\end{align}
	%	Combing this inequality with \eqref{eq:boundofepsilont} and using the decreasing property of $\gamma_{t}$ yields \eqref{eq:boundofbarmuthatmuit}.
\end{proof}

\begin{lemma}\label{lem:boundofnormbarmut}
	Suppose Assumptions \ref{asm:probassumption}--\ref{ass: networkassumption} hold. Let $ \tau = \lceil \sqrt{t}\rceil$ and $\delta=F+\frac{\sqrt{p}\epsilon}{2N}$. For any $t \ge 0$,
	\begin{align}
		\|\bar{\mu}_t\| \le 4\sigma\sqrt{t}\!+\!C_1\sqrt{t}+C_2, \displaybreak[0] \label{eq:boundofnormbarmut}
		%&\le 4\delta\sqrt{t}+\theta_{t}(\lceil\sqrt{t}\rceil)\!+\!\frac{8\sqrt{t}\delta^2}{\epsilon} \log\frac{8\delta^2}{\epsilon^2}+\sqrt{p}C(2NC / \epsilon)^2\!\notag\displaybreak[0]\\
		%&+(6+(2 NC / \epsilon)^4)\delta. \displaybreak[0]\\
	\end{align}
	where $C_1=\frac{8\delta^2}{\epsilon} \log\frac{8\delta^2}{\epsilon^2}\!+\!\frac{(4NFr\sqrt{p}+2rp\epsilon)}{(1-\beta)} \!+\!\frac{8NGR+16NR^2}{\epsilon}$ and $C_2=\sqrt{p}C(2NC / \epsilon)^2\!+\!(6\!+\!(2 NC / \epsilon)^4)\delta+\frac{24NF^2}{\epsilon}\!+\!\frac{8N^2F^2r+4NFr\sqrt{p}}{\epsilon(1-\beta)}+\!\frac{24NFp\!+\!8p\epsilon}{N}$.
\end{lemma}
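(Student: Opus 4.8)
The plan is to bound $\|\bar\mu_t\|$ by a drift argument on the Lyapunov function $L_t := \tfrac12\|\bar\mu_t\|^2$, using the Slater point $\hat x_i$ from Assumption~\ref{asm:probassumption}\eqref{slatercondition} to obtain a negative drift whenever $\|\bar\mu_t\|$ is large. First I would start from the drift bound \eqref{eq:intuitionofCDPP} (equivalently \eqref{eq:boundoftriangle}), specialized with the parameter choices $V_{t+1}=\sqrt{t+1}$, $\eta_{t+1}=t+1$, $\gamma_{t+1}=C/\sqrt{t+1}$. Applying Lemma~\ref{lem:strongconvSit} with the test point $\tilde x_{i,t}=\hat x_i$ converts the ``minimized function'' term on the right-hand side of \eqref{eq:intuitionofCDPP} into $\sum_i S_{i,t}(\hat x_i,\bar\mu_t) - \eta_{t+1}\|x_{i,t+1}-\hat x_i\|^2 + F\|\bar\mu_t-\hat\mu_{i,t}\|$; since $S_{i,t}(\hat x_i,\bar\mu_t)=V_{t+1}f_i(\hat x_i)+\langle\bar\mu_t,g_i(\hat x_i)\rangle+\eta_{t+1}\|\hat x_i - x_{i,t}\|^2$, and $\sum_i g_i(\hat x_i)\le -\epsilon\mathbf 1_p$ with $\bar\mu_t\ge 0$, the inner-product term contributes $\langle\bar\mu_t,\sum_i g_i(\hat x_i)\rangle \le -\epsilon\|\bar\mu_t\|_1 \le -\tfrac{\epsilon}{\sqrt p}\|\bar\mu_t\|$. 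This is the source of the negative drift. The buffer term $N\langle\bar\mu_t,\gamma_{t+1}\mathbf 1_p\rangle \le N\sqrt p\,\gamma_{t+1}\|\bar\mu_t\|$ works against us but is dominated once $\gamma_{t+1}$ is small relative to $\epsilon/(2N)$, which is exactly why $\delta = F+\tfrac{\sqrt p\,\epsilon}{2N}$ appears.

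Next I would collect terms: after cancellation the drift satisfies something of the shape
\begin{align}
\tfrac12\|\bar\mu_{t+1}\|^2 - \tfrac12\|\bar\mu_t\|^2 &\le -\tfrac{\epsilon}{2\sqrt p}\|\bar\mu_t\| + (F+2\sqrt p\,\gamma_{t+1})\sum_{i=1}^N\|\bar\mu_t-\hat\mu_{i,t}\| \notag\\
&\quad + \sqrt{t+1}\sum_{i=1}^N\big(f_i(\hat x_i)-f_i(x_{i,t+1})\big) + (t+1)\sum_{i=1}^N\big(\|\hat x_i-x_{i,t}\|^2-\|x_{i,t+1}-x_{i,t}\|^2-\|x_{i,t+1}-\hat x_i\|^2\big) \notag\\
&\quad + 2NF^2 + 4NFp\gamma_{t+1}+4Np\gamma_{t+1}^2, \notag
\end{align}
where I bound $\sum_i\|\bar\mu_t-\hat\mu_{i,t}\|$ by Lemma~\ref{lem:metworkmutbound}, bound $\sqrt{t+1}\sum_i(f_i(\hat x_i)-f_i(x_{i,t+1}))\le\sqrt{t+1}\,NG$, and use the parallelogram/three-point bound $\|\hat x_i-x_{i,t}\|^2-\|x_{i,t+1}-x_{i,t}\|^2-\|x_{i,t+1}-\hat x_i\|^2 = 2\langle x_{i,t+1}-x_{i,t},\hat x_i - x_{i,t+1}\rangle\le 2R^2$ together with $\|x\|\le R$ bounds, so the $(t+1)$-multiplied bracket contributes $\le 2NR^2$. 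Everything on the right is then either a constant, an $O(\sqrt{t})$ term, or the genuinely negative $-\tfrac{\epsilon}{2\sqrt p}\|\bar\mu_t\|$ term.

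Then I would run the standard drift lemma over a window of length $\tau=\lceil\sqrt t\rceil$. The idea is: either $\|\bar\mu_t\|$ is already $O(\sqrt t)$, or over the next $\tau$ steps the cumulative negative drift $-\tfrac{\epsilon}{2\sqrt p}\sum\|\bar\mu_k\|$ overwhelms the cumulative positive terms (which are $O(\tau\sqrt t)=O(t)$ against a $\tfrac{\epsilon}{2\sqrt p}\tau\|\bar\mu_t\|$ style lower bound when $\|\bar\mu\|$ stays large), forcing $\|\bar\mu\|$ down. Quantitatively, summing the drift from $k=t$ to $t+\tau-1$ and telescoping the left side gives $\tfrac12\|\bar\mu_{t+\tau}\|^2 \le \tfrac12\|\bar\mu_t\|^2 - \tfrac{\epsilon}{2\sqrt p}\sum_{k=t}^{t+\tau-1}\|\bar\mu_k\| + (\text{stuff of order }\tau\sqrt{t+\tau})$; combined with the one-step bound $\|\bar\mu_{k+1}\|\le\|\bar\mu_k\| + (\text{bounded increments of order }\sqrt k + F + \sqrt p\gamma_k)$ from \eqref{eq:offlineupdateofuk} directly — namely $\|\bar\mu_{k+1}\|\le\|\bar\mu_k\|+NF+N\sqrt p\gamma_{k+1}$ — one shows by the usual contradiction/inductive argument that $\|\bar\mu_t\|$ cannot exceed a threshold of the form $4\sigma\sqrt t + C_1\sqrt t + C_2$. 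The constants $C_1,C_2$ are then just the bookkeeping: $C_1$ collects the $O(\sqrt t)$ coefficients (the $\log\frac{8\delta^2}{\epsilon^2}$ factor is the standard artifact of solving $x^2\le -cx\sqrt t + d t$ type inequalities, the $\frac{4NFr\sqrt p + 2rp\epsilon}{1-\beta}$ piece comes from Lemma~\ref{lem:metworkmutbound}, and $\frac{8NGR+16NR^2}{\epsilon}$ from the objective and proximal terms), while $C_2$ collects pure constants, including the $\sqrt p C(2NC/\epsilon)^2$ and $(2NC/\epsilon)^4\delta$ pieces that arise from handling the first $O((NC/\epsilon)^2)$ iterations separately where the window argument has not yet kicked in. The main obstacle is precisely this drift-window bookkeeping: correctly choosing $\tau=\lceil\sqrt t\rceil$ so the per-window positive accumulation is $O(t)$, matching it against $\tfrac{\epsilon}{2\sqrt p}\tau\|\bar\mu\|$, and tracking the extra additive slack incurred because $\|\bar\mu_k\|$ can vary across the window by the increment bound — all while keeping the $\gamma$-dependent terms absorbed so that the final constants split cleanly into a $\sqrt t$-coefficient $C_1$ and a constant $C_2$ of the stated form.
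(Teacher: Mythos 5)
Your high-level strategy is the paper's: combine the drift bound \eqref{eq:boundoftriangle} with Lemma~\ref{lem:strongconvSit} evaluated at the Slater point $\tilde x_{i,t}=\hat x_i$ to get a negative drift term proportional to $\|\bar\mu_t\|$, sum over a window of length $\tau=\lceil\sqrt t\rceil$, use Lemma~\ref{lem:metworkmutbound} for the consensus error, use the bounded increments of $\bar\mu$ from \eqref{eq:offlineupdateofuk}, and treat the early iterations (where $\gamma_t>\epsilon/(2N)$, i.e.\ $t\lesssim(2NC/\epsilon)^4$) separately. However, one concrete step fails as written: your treatment of the $\eta$-weighted proximal terms. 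You bound the per-step bracket $\|\hat x_i-x_{i,t}\|^2-\|x_{i,t+1}-x_{i,t}\|^2-\|x_{i,t+1}-\hat x_i\|^2=2\langle x_{i,t+1}-x_{i,t},\hat x_i-x_{i,t+1}\rangle\le 2R^2$ and then claim the $(t+1)$-multiplied bracket contributes $\le 2NR^2$; in fact it contributes up to $2N(t+1)R^2$ per step, i.e.\ $O(t)$ per step and $O(t^{3/2})$ over the window. Balanced against the negative drift $\sim\epsilon\tau\|\bar\mu_t\|=\epsilon\sqrt t\,\|\bar\mu_t\|$, a per-step bound of this kind only forces $\|\bar\mu_t\|=O(t)$, not the claimed $O(\sqrt t)$. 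The repair is to keep the telescoping structure of the $\eta_{s+1}\bigl(\|\hat x_i-x_{i,s}\|^2-\|x_{i,s+1}-\hat x_i\|^2\bigr)$ terms across the whole window (summation by parts with the increasing $\eta_s$), so the window total is $O(\eta_{t+\tau}R^2)=O(tR^2)$ rather than $O(t^{3/2})$; this is exactly what produces the $4R^2\eta_t$ term in \eqref{eq:boundofsumbarmuts1muts} and, after dividing by $\epsilon\tau$, the $\frac{16NR^2}{\epsilon}$ piece of $C_1$. Without this, the window argument does not deliver \eqref{eq:boundofnormbarmut}.

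A secondary remark on the last step: the paper does not use the deterministic ``last-crossing/contradiction'' induction you propose. It converts the window property ($\|\bar\mu_{t+\tau}\|-\|\bar\mu_t\|\le-\epsilon\tau/(4N)$ whenever $\|\bar\mu_t\|\ge\theta_t(\tau)$, plus per-step increments bounded by $\delta$) into a uniform bound via a Hajek-type exponential recursion $e^{\tilde r\|\bar\mu_t\|}\le\rho\,e^{\tilde r\|\bar\mu_{t-\tau}\|}+e^{\tilde r\tau\delta}e^{\tilde r\theta_{t-\tau}(\tau)}$ iterated backwards in windows, and the $\frac{8\delta^2}{\epsilon}\log\frac{8\delta^2}{\epsilon^2}$ term in $C_1$ is precisely the resulting $\frac1{\tilde r}\log\frac1{1-\rho}$ slack, not an artifact of solving a quadratic inequality as you suggest. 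A deterministic induction of the kind you describe can also yield an $O(\sqrt t)$ bound (with threshold $\theta_t(\tau)$ plus one window's worth of increments $\tau\delta$), but it would produce different constants than the $C_1,C_2$ stated in the lemma; if you want the stated constants you need to follow the exponential-drift route, and in either case you must be careful that the threshold $\theta_s(\tau)$ and window length are pinned to the terminal $t$ (as the paper does) so the backward iteration is consistent.
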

\addtolength{\topmargin}{0.045in}
\begin{proof}
	Consider the cases $t \geq \max \{6,(\frac{2 NC}{\epsilon})^4\}$. It follows that $t \ge 2\tau \ge (\frac{2 NC}{\epsilon})^2$. Thus, $\gamma_{t}=\frac{C}{\sqrt{t}} \le \frac{\epsilon}{2N}$.  
	By substituting  \eqref{eq:strongSit} into \eqref{eq:boundoftriangle} with $\tilde{x}_{i,t}=\hat{x}_{i}$, and adding it from $s=t, t+1, \ldots, t+\tau-1$. By following the the line of proof in \cite[Lemma~3.5]{kim2023online}, %and utilizing Assumption~\ref{asm:probassumption}--\ref{ass: networkassumption}, Lemma~\ref{lem:metworkmutbound},  \eqref{eq:stepsizechosen}, \eqref{eq:boundoftriangle}, and $\gamma_{t+1} \le\gamma_{t}\le \frac{\epsilon}{2N}$, 
	we calculate the upper bounds of each term in right side of the accumulated inequality to obtain the upper bound of $\sum_{s=t}^{t+\tau-1}\!\!\triangle_s/N$. Let $\theta_t(\tau)=\frac{24NF^2}{\epsilon}\!+\!\frac{8N^2F^2r+4NFr\sqrt{p}}{\epsilon(1-\beta)}+\!\frac{24NFp\!+\!8p\epsilon}{N}+\frac{2NF\!+\!\sqrt{p}\epsilon}{N}\tau\!+\!(\frac{(4NFr\sqrt{p}+2rp\epsilon)}{(1-\beta)}+\frac{8NGR}{\epsilon}) V_{t}\!+\!\frac{16NR^2}{\epsilon\tau}\eta_t$. The upper bound of the $\sum_{s=t}^{t+\tau-1}\!\!\triangle_s/N$ is as follows
	\begin{align}
		&\frac{1}{N}\sum_{s=t}^{t+\tau-1}\!\!\triangle_s 
		\le(6F^2\!+\!\frac{2NF^2r\!+\!Fr\sqrt{p}}{1-\beta})\tau\!+\!\frac{6NFp\!+\!2p\epsilon}{N^2}\epsilon\tau\notag\\ 
		&+\frac{2NF\!+\!\sqrt{p}\epsilon}{4N^2}\epsilon\tau^2\!+\!(\frac{(2NFr\sqrt{p}\!+\!rp\epsilon)}{2N(1-\beta)}+\!\frac{2GR}{\epsilon})\epsilon\tau V_{t}\notag\displaybreak[0]\\
		&+4R^2\eta_t\!-\!\frac{\epsilon\tau}{2N}\|\bar{\mu}_t\| =  \frac{\epsilon\tau}{4N}\theta_t(\tau)-\frac{\epsilon\tau}{2N}\|\bar{\mu}_t\|, \displaybreak[0]\label{eq:boundofsumbarmuts1muts}	
	\end{align}
	which leads to
	$\|\bar{\mu}_{t+\tau}\|^2=\|\bar{\mu}_{t}\|^2+\frac{2}{N}\sum_{s=t}^{t+\tau-1}\triangle_s \le \|\bar{\mu}_{t}\|^2-\!\frac{\epsilon\tau}{N}\|\bar{\mu}_t\|+ \frac{\epsilon\tau}{2N}\theta_t(\tau)$. If $\|\bar{\mu}_t\|\ge \theta_t(\tau)$, we have $\|\bar{\mu}_{t+\tau}\| - \|\bar{\mu}_{t}\|\le-\frac{\epsilon\tau}{4N}$. Similar to \cite[Lemma~3.4(a)]{kim2023online}, we derive $|\bar{\mu}_t\|\le \sqrt{p}C(2NC / \epsilon)^2+(\frac{\sqrt{p}\epsilon}{2N}+F)t$ that results in \eqref{eq:boundofnormbarmut} when $t < \max \{6,(\frac{2 NC}{\epsilon})^4\}$.
	%	\begin{align}
		%		\|\bar{\mu}_t\|\le \sqrt{p}C(2NC / \epsilon)^2+(\frac{\sqrt{p}\epsilon}{2N}+F)t.\displaybreak[0] \label{eq:barmutk1taubound}
		%	\end{align}
	Let $\xi=\frac{\epsilon}{4N}$, $\tilde{r}=\frac{\xi}{4\lceil \sqrt{t}\rceil\delta^2}$, and $\rho=1-\frac{\tilde{r}\xi\tau}{2}$. By imitating the proof of \cite[Lemma~8]{kim2023online}, we obtain
	$
	e^{\tilde{r}\|\bar{\mu}_t\|}\le \rho e^{\tilde{r}\|\bar{\mu}_{t-\tau}\|}+e^{\tilde{r}\tau\delta}e^{\tilde{r}\theta_{t-\tau}(\tau)}$. Since $2\tau \ge t-(k-1)\tau \ge \tau\ge (\frac{2 NC}{\epsilon})^2$ for some $k=\lfloor \frac{t}{\tau}\rfloor \ge 2$, applying the aforementioned inequality for $s=t, t-\tau, \ldots, t-(k-2)\tau $ yields $e^{\tilde{r}\|\bar{\mu}_t\|} \le e^{\tilde{r}\sqrt{p}C(2NC / \epsilon)^2}e^{2\tilde{r}\tau\delta}e^{\tilde{r} \theta_{t}}/(1-\rho)$, thus $	\|\bar{\mu}_t\| \le \sqrt{p}C(2NC / \epsilon)^2\!+\!4\delta\sqrt{t}\!+\!\theta_{t}(\lceil\sqrt{t}\rceil)\!\!+\!\!\frac{1}{\tilde{r}} \log\frac{1}{1\!-\!\rho}$.
	%	\begin{align}
		%		\|\bar{\mu}_t\| \le \sqrt{p}C(2NC / \epsilon)^2\!\!+\!\!4\delta\sqrt{t}\!+\!\theta_{t}(\lceil\sqrt{t}\rceil)\!\!+\!\!\frac{1}{\tilde{r}} \log\frac{1}{1\!-\!\rho}. \displaybreak[0] \label{eq:ebarmutbound}
		%	\end{align}
	Substituting $\theta_{t}(\lceil\sqrt{t}\rceil)$, $\tilde{r}$, and $\rho$ into it gives \eqref{eq:boundofnormbarmut}.
\end{proof} 
Next, we prove \eqref{eq:functionconvergence}--\eqref{eq:sqrtTconstrviolation} based on above auxiliary lemmas.
\begin{itemize}
	\item \textit{Proof of \eqref{eq:functionconvergence}}: 
	Let $\tilde{x}_{i,k}=x_{i}^*$, $\forall k \ge 0$ in Lemma~\ref{lem:strongconvSit}. With $\bar{\mu}_k \ge \mathbf{0}_p$ and $\sum_{i=1}^{N}g_i(x_i^*)\le \mathbf{0}_p$, the term $\langle \bar{\mu}_k, ~\sum_{i=1}^Ng_i(x_i^*)\rangle \le 0$. Combing \eqref{eq:boundoftriangle} with \eqref{eq:strongSit}, dividing both sides by $V_{k+1}$, and then summing it from $k=0$ to $t-1$ give
	\begin{align}
		&\sum_{k=1}^{t}\sum_{i=1}^{N}\big(f_{i}(x_{i,k})\!-\!f_{i}(x_{i}^*)\big)\le\!\sum_{k=1}^{t}\frac{2NF^2}{V_{k}}\!\!+\!\!\frac{N\sqrt{p}\gamma_{k}}{V_{k}}\|\bar{\mu}_k\| \displaybreak[0] \notag \\
		&+\!\!\sum_{k=1}^{t}\frac{4NFp\gamma_{k}\!+\!4Np\gamma_{k}^2}{V_{k}}\!+\!\frac{(2F\!+\!2\sqrt{p}\gamma_{k})}{V_{k}}\!\sum_{i=1}^{N}\|\bar{\mu}_k\!\!-\!\!\hat{\mu}_{i,k}\| \displaybreak[0] \notag\\
		&+\!\sum_{k=0}^{t-1}\frac{\eta_{k+1}}{V_{k+1}}\sum_{i=1}^N (\|x_{i,k}\!-\!x_i^*\|^2-\!\|x_{i,k+1}\!-\!x_i^*\|^2). \label{eq:sumTsumNfi}
	\end{align}
	%	which comes from $-\sum_{k=0}^{t-1}\frac{1}{V_{k+1}}\triangle_k\le 0$.
	Based on \eqref{eq:stepsizechosen}, \eqref{eq:boundofbarmuthatmuit}, and \eqref{eq:boundofnormbarmut}, we calculate upper bounds of each term on the right-hand side of \eqref{eq:sumTsumNfi}, which gives	
	\begin{align}
		\sum_{k=1}^{t}\sum_{i=1}^{N}\big(f_{i}(x_{i,k})-f_{i}(x_{i}^*)\big)\le C_f \sqrt{t}, \displaybreak[0]\label{eq:sumfibound}
	\end{align}
	where $C_f=12NF^2\!+16NPC^2\!+\!16NFpC+4N^2r(F+\sqrt{p}C)^2/(1\!-\!\beta)\!+\!2pC(C_1+C_2+4\sigma)+2R^2+2$. % and $C_1$, $C_2$ are given in Lemma~\ref{lem:boundofnormbarmut}. 
	Applying Jesen's inequality to \eqref{eq:sumfibound} gives \eqref{eq:functionconvergence}. 	
	\item \textit{Proof of \eqref{eq:sqrtTconstrviolation}}: 
	Combing Lemma~\ref{lem:cumulativeconstraintvion} with Lemma~\ref{lem:boundofnormbarmut} and using $\sum_{i=1}^{N}g_i(\bar{x}_{i,t}) \le \sum_{k=1}^{t}\sum_{i=1}^{N}g_i(\bar{x}_{i,k})/t $ give
	\begin{align}
		\sum_{i=1}^{N}g_i(\bar{x}_{i,t})
		\le \frac{N(4\sigma\!+\!C_1\!-\!C)\mathbf{1}_p}{\sqrt{t}} +\frac{NC_2\mathbf{1}_p}{t}, \label{eq:boundofconsViolation}
	\end{align}
which implies that \eqref{eq:sqrtTconstrviolation} holds with $C_g=N(4\sigma\!+\!C_1\!+\!C_2\!-\!C)$, where $\sigma$, $C_1$, and $C_2$ are given in Lemma~\ref{lem:boundofnormbarmut}.
	%\textcolor{red}{If we choose $C_1=4\sigma+C_1+C_2$, we have $\sum_{i=1}^{N}g_i(\bar{x}_{i,t})\le \frac{-NC_2\mathbf{1}_p}{\sqrt{T}}+\frac{NC_2\mathbf{1}_p}{T} \le \mathbf{0}_p$. Feasibility}.
	\item \textit{Proof of \eqref{eq:zeroconstriant}}: 
	Let $C_0=4\sigma+C_1+1$, where $C_1$ is given in Lemma~\ref{lem:boundofnormbarmut}. For any $C \ge C_1$, $t \ge t_1$, \eqref{eq:boundofconsViolation} indicates $\sum_{i=1}^{N}g_i(\bar{x}_{i,t}) \le \mathbf{0}_p$, where $t_1$ can be derived from \eqref{eq:boundofconsViolation}. 
\end{itemize}

%感觉这一点写的不清楚，明天上午还是要把这个再多写点，才能写清楚

%%%%%%%%%%%%%%%%%%%%%%%%%%%%%%%%%%
\addtolength{\textheight}{-3cm} 
%1、给出网络部分，可以用在（7）中的网络部分； 2、直接给出0801reprot中lemma9中barmut的bound； 3、开始证明9中function error, zeor constriant

\bibliographystyle{IEEEtran}
\bibliography{lcssref}

% Generated by IEEEtran.bst, version: 1.14 (2015/08/26)
\begin{thebibliography}{10}
\providecommand{\url}[1]{#1}
\csname url@samestyle\endcsname
\providecommand{\newblock}{\relax}
\providecommand{\bibinfo}[2]{#2}
\providecommand{\BIBentrySTDinterwordspacing}{\spaceskip=0pt\relax}
\providecommand{\BIBentryALTinterwordstretchfactor}{4}
\providecommand{\BIBentryALTinterwordspacing}{\spaceskip=\fontdimen2\font plus
\BIBentryALTinterwordstretchfactor\fontdimen3\font minus
  \fontdimen4\font\relax}
\providecommand{\BIBforeignlanguage}[2]{{%
\expandafter\ifx\csname l@#1\endcsname\relax
\typeout{** WARNING: IEEEtran.bst: No hyphenation pattern has been}%
\typeout{** loaded for the language `#1'. Using the pattern for}%
\typeout{** the default language instead.}%
\else
\language=\csname l@#1\endcsname
\fi
#2}}
\providecommand{\BIBdecl}{\relax}
\BIBdecl

\bibitem{xiao2018distributed}
Y.~Xiao and M.~Krunz, ``Distributed optimization for energy-efficient fog
  computing in the tactile internet,'' \emph{IEEE Journal on Selected Areas in
  Communications}, vol.~36, no.~11, pp. 2390--2400, 2018.

\bibitem{vujanic2016decomposition}
R.~Vujanic, P.~M. Esfahani, P.~J. Goulart, S.~Mari{\'e}thoz, and M.~Morari, ``A
  decomposition method for large scale milps, with performance guarantees and a
  power system application,'' \emph{Automatica}, vol.~67, pp. 144--156, 2016.

\bibitem{huang2023distributed}
Y.~Huang, Z.~Meng, J.~Sun, and W.~Ren, ``Distributed multi-proximal algorithm
  for nonsmooth convex optimization with coupled inequality constraints,''
  \emph{IEEE Transactions on Automatic Control}, 2023.

\bibitem{wu2022distributed}
X.~Wu, H.~Wang, and J.~Lu, ``Distributed optimization with coupling
  constraints,'' \emph{IEEE Transactions on Automatic Control}, vol.~68, no.~3,
  pp. 1847--1854, 2022.

\bibitem{liu2020unitary}
C.~Liu, H.~Li, and Y.~Shi, ``A unitary distributed subgradient method for
  multi-agent optimization with different coupling sources,''
  \emph{Automatica}, vol. 114, p. 108834, 2020.

\bibitem{liang2019Cyber}
S.~Liang, G.~Yin \emph{et~al.}, ``Distributed dual subgradient algorithms with
  iterate-averaging feedback for convex optimization with coupled
  constraints,'' \emph{IEEE Transactions on Cybernetics}, vol.~51, no.~5, pp.
  2529--2539, 2019.

\bibitem{bose2021distributed}
S.~Bose, H.~D. Nguyen, H.~Liu, Y.~Guo, T.~T. Doan, and C.~L. Beck,
  ``Distributed dual subgradient methods with averaging and applications to
  grid optimization,'' \emph{arXiv preprint arXiv:2107.07061}, 2021.

\bibitem{simonetto2016primal}
A.~Simonetto and H.~Jamali-Rad, ``Primal recovery from consensus-based dual
  decomposition for distributed convex optimization,'' \emph{Journal of
  Optimization Theory and Applications}, vol. 168, pp. 172--197, 2016.

\bibitem{liang2019distributed}
S.~Liang, G.~Yin \emph{et~al.}, ``Distributed smooth convex optimization with
  coupled constraints,'' \emph{IEEE Transactions on Automatic Control},
  vol.~65, no.~1, pp. 347--353, 2019.

\bibitem{falsone2017dual}
A.~Falsone, K.~Margellos, S.~Garatti, and M.~Prandini, ``Dual decomposition for
  multi-agent distributed optimization with coupling constraints,''
  \emph{Automatica}, vol.~84, pp. 149--158, 2017.

\bibitem{chang2014distributed}
T.-H. Chang, A.~Nedi{\'c}, and A.~Scaglione, ``Distributed constrained
  optimization by consensus-based primal-dual perturbation method,'' \emph{IEEE
  Transactions on Automatic Control}, vol.~59, no.~6, pp. 1524--1538, 2014.

\bibitem{mateos2016distributed}
D.~Mateos-N{\'u}nez and J.~Cort{\'e}s, ``Distributed saddle-point subgradient
  algorithms with laplacian averaging,'' \emph{IEEE Transactions on Automatic
  Control}, vol.~62, no.~6, pp. 2720--2735, 2016.

\bibitem{lee2017sublinear}
S.~Lee and M.~M. Zavlanos, ``On the sublinear regret of distributed primal-dual
  algorithms for online constrained optimization,'' \emph{arXiv preprint
  arXiv:1705.11128}, 2017.

\bibitem{li2020distributed}
X.~Li, X.~Yi, and L.~Xie, ``Distributed online optimization for multi-agent
  networks with coupled inequality constraints,'' \emph{IEEE Transactions on
  Automatic Control}, vol.~66, no.~8, pp. 3575--3591, 2020.

\bibitem{notarnicola2019constraint}
I.~Notarnicola and G.~Notarstefano, ``Constraint-coupled distributed
  optimization: A relaxation and duality approach,'' \emph{IEEE Transactions on
  Control of Network Systems}, vol.~7, no.~1, pp. 483--492, 2019.

\bibitem{camisa2021distributed}
A.~Camisa, F.~Farina, I.~Notarnicola, and G.~Notarstefano, ``Distributed
  constraint-coupled optimization via primal decomposition over random
  time-varying graphs,'' \emph{Automatica}, vol. 131, p. 109739, 2021.

\bibitem{yu2015convergence}
H.~Yu and M.~J. Neely, ``On the convergence time of the drift-plus-penalty
  algorithm for strongly convex programs,'' in \emph{54th IEEE Conference on
  Decision and Control}, 2015, pp. 2673--2679.

\bibitem{wu2023distributed}
X.~Wu, S.~Magn{\'u}sson, and M.~Johansson, ``Distributed safe resource
  allocation using barrier functions,'' \emph{Automatica}, vol. 153, p. 111051,
  2023.

\bibitem{halabian2019distributed}
H.~Halabian, ``Distributed resource allocation optimization in 5g virtualized
  networks,'' \emph{IEEE Journal on Selected Areas in Communications}, vol.~37,
  no.~3, pp. 627--642, 2019.

\bibitem{diamond2016cvxpy}
S.~Diamond and S.~Boyd, ``Cvxpy: A python-embedded modeling language for convex
  optimization,'' \emph{The Journal of Machine Learning Research}, vol.~17,
  no.~1, pp. 2909--2913, 2016.

\bibitem{wang2023distributed}
D.~Wang, D.~Zhu, K.~C. Sou, and J.~Lu, ``Distributed online optimization with
  coupled inequality constraints over unbalanced directed networks,''
  \emph{arXiv:2309.01509}, 2023.

\bibitem{kim2023online}
Y.~Kim and D.~Lee, ``Online convex optimization with stochastic constraints:
  Zero constraint violation and bandit feedback,'' \emph{arXiv preprint
  arXiv:2301.11267}, 2023.

\end{thebibliography}

\end{document}